\newtheorem{theorem}{Theorem}[]
\newtheorem{remark}[theorem]{Remark}
\newtheorem{lemma}[theorem]{Lemma}
\newtheorem{proposition}[theorem]{Proposition}
\newtheorem{corollary}[theorem]{Corollary}
\newcommand{\N}{\mathbb{N}}
\newcommand{\Z}{\mathbb{Z}}
\newcommand{\Q}{\mathbb{Q}}
\newcommand{\F}{\mathbb{F}}
\titleformat{\section}[display]{\normalfont\huge\bfseries\centering}{\centering\chaptertitlename\thechapter}{10pt}{\Large}
\titlespacing*{\section}{0pt}{0ex}{0ex}
\begin{document}
\title{Integrality of $\mathfrak{p}$-adic $L$-functions at Eisenstein Primes} %%%%%%%%%%%%
\author[M. Verheul]{Matthew Verheul}
\date{\today}
%\address{Address}
%\email{example@mail.com}
\maketitle

\let\thefootnote\relax
%\footnotetext{MSC2020: Primary 00A05, Secondary 00A66.} %%%%%%%%%%

\begin{abstract}
Let $f$ be a normalized, ordinary newform of weight $\ge 2$. For each prime $\mathfrak{p}$ of $F=\Q(a_n)_{n\in \N}$, there is an associated $\mathfrak{p}$-adic $L$-function $\mathcal{L}_\mathfrak{p}(f)\in \Lambda \otimes \Q$ interpolating special values of the classical $L$-function. If $f$ is not congruent modulo $\mathfrak{p}$ to an Eisenstein series, one knows $\mathcal{L}_\mathfrak{p}(f)\in \Lambda$. In this paper, we show, under mild hypotheses on the ramification of $f$, that this integrality result holds when $f$ is congruent to an Eisenstein series. Moreover, we also obtain a divisibility in the main conjecture for $\mathcal{L}_\mathfrak{p}(f)$. As an application, we show that the integrality result and the divisibility hold in particular when $f$ is of weight $2$.
\end{abstract} %%%%%%%%%

\bigskip

\begin{center}1. \textsc{Introduction}\end{center}

Let $f$ be a normalized newform of level $N$ and weight $k\ge 2$. Let $F$ be the number field obtained by adjoining the Fourier coefficients of $f$ to $\Q$. Throughout the paper, we fix a prime $\mathfrak{p}$ of $F$, and let $p$ denote the rational prime it lies over. We shall also assume that $p>2$. Let $K$ be the completion of $F$ at $\mathfrak{p}$. We denote the ring of integers of $K$ by $\mathcal{O}$, and we fix a uniformizer $\varpi$ of the maximal ideal of $\mathcal{O}$. Let $\rho$ denote the $\mathfrak{p}$-adic Galois representation $\rho: \text{Gal}(\overline{\Q}/\Q)\to \text{Gl}(V)$ associated to $f$, where $V$ is a two-dimensional vector space over $K$. We say that $\mathfrak{p}$ is an Eisenstein prime for $f$ if it is congruent modulo $\mathfrak{p}$ to an Eisenstein series. Equivalently, $\mathfrak{p}$ is Eisenstein if for some (hence any) choice of Galois-stable lattice $T\subseteq V$, $T/\varpi$ is reducible as a Galois-representation over $\F_q:= \mathcal{O}/\varpi$. If $\mathfrak{p}$ is Eisenstein, then $f$ is ordinary at $\mathfrak{p}$.

Finally, we write $\Lambda$ to denote the Iwasawa algebra:
$$\Lambda:= \varprojlim_n \mathcal{O}[\text{Gal}(\Q(\zeta_{p^n})/\Q)]$$
Associated to $f$ is a $\mathfrak{p}$-adic $L$-function $\mathcal{L}_\mathfrak{p}$. This is constructed, for example in \cite[Theorem~16.6]{kato2004p}, as an element of $\Lambda\otimes \Q$ when $f$ is ordinary. If one assumes further that $\mathfrak{p}$ is not an Eisenstein prime for $f$, Kato shows \cite[Theorem~17.4]{kato2004p} that $\mathcal{L}_\mathfrak{p} \in \Lambda$.
\\
\\
\noindent \textbf{Outline.} In section 2, we show the integrality of Kato's zeta element under mild hypotheses. In section 3, we use this integrality to adapt the methods of Wuthrich in \cite{wuthrich2014integrality} to our situation to obtain a divisibility in the main conjecture. As a consequence, we derive the integrality of $\mathcal{L}_\mathfrak{p}$. In section 4, we show that $f$ satisfies the hypothesis imposed in section 2 if it has weight $2$; in particular, for any abelian variety $A$ of $Gl_2$-type, $\mathcal{L}_\mathfrak{p}(A)$ is integral.
\begin{center}
    2. \textsc{Freeness and Integrality}
\end{center}
In this section, we derive criteria for the integrality of Kato's zeta element $z_\gamma^{(p)}$, which is defined in terms of some $\gamma\in V$ in \cite[Theorem~12.5]{kato2004p}. 
For any Galois-stable lattice $T\subseteq V$, we define:
$$\mathbf{H}^1(T):= H^1(\text{Gal}(G_\Sigma(\Q),T\otimes \Lambda)$$
A priori, $z_\gamma^{(p)}\in \mathbf{H}^1(T)\otimes \Q$. However, Kato has shown in \cite[13.14]{kato2004p} that if $T$ is the integral lattice $V_{\mathcal{O}}(f)$ and $\mathbf{H}^1(T)$ is free, then $z_\gamma^{(p)}\in \mathbf{H}^1(T)$ whenever $\gamma\in T$.

We will show under a mild hypothesis that $\mathbf{H}^1(T)$ is free for some lattice $T$. We then show that integrality of $z_\gamma^{(p)}$ follows even if $T\neq V_\mathcal{O}(f)$.

The following result, whose proof will be important to us, shows freeness in the non-Eisenstein case.
\begin{theorem}[Kato \cite{kato2004p}]
   Suppose that some character $\psi: G\to \F_q $, not equal to a twist $\F_q(r)$ of the trivial character, occurs in $\overline{\rho}^{ss}$. Then we may find a lattice $T$ for which $\mathbf{H}^1(T)$ is free over $\Lambda$.
\end{theorem}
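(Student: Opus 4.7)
The strategy is to reduce freeness of $\mathbf{H}^1(T)$ to a mod-$\varpi$ cohomological vanishing that the hypothesis on $\psi$ controls, via the depth criterion for freeness over $\Lambda$.

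\textbf{Paragraph 1 (symmetry of the hypothesis).} First I would observe that the hypothesis is effectively symmetric in the Jordan--H\"older factors of $\overline{\rho}^{ss}$. Writing $\overline{\rho}^{ss}=\psi_1\oplus\psi_2$, the product $\psi_1\psi_2$ agrees with the reduction of $\det\rho$, which is (up to a Nebentypus of order prime to $p$) a power of the mod-$\varpi$ cyclotomic character. Hence if $\psi_1$ is not of the form $\F_q(r)$, neither is $\psi_2$; the hypothesis implies that \emph{both} Jordan--H\"older factors avoid being cyclotomic twists of the trivial character.

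\textbf{Paragraph 2 (freeness criterion).} The Iwasawa algebra $\Lambda$ is a finite product of two-dimensional regular local rings indexed by characters of $\Delta:=\text{Gal}(\Q(\mu_p)/\Q)$, so Auslander--Buchsbaum reduces freeness of a finitely generated $\Lambda$-module to the depth-$2$ condition on each component. Via Shapiro's lemma, identify $\mathbf{H}^1(T)\cong H^1(G_\Sigma(\Q),T\otimes_\mathcal{O}\Lambda^\iota)$, where $\Lambda^\iota$ is $\Lambda$ twisted by the inverse of the tautological character $\kappa:G_\Q\twoheadrightarrow\Gamma\hookrightarrow\Lambda^\times$. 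Using the Iwasawa-theoretic Poitou--Tate long exact sequence, depth~$2$ for $\mathbf{H}^1(T)$ becomes the combined vanishing of $\mathbf{H}^0(T)$ (giving torsion-freeness) and $\mathbf{H}^0$ of the Tate dual $T^\vee(1)\otimes\Lambda^\iota$ (giving reflexivity, equivalently pseudo-nullity of the obstruction from $\mathbf{H}^2$). A Nakayama argument then reduces both to showing that no Jordan--H\"older factor of $T/\varpi$, after twisting by any character of $\Gamma$ valued in $\overline{\F}_q^\times$, becomes trivial as a $G_\Q$-representation.

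\textbf{Paragraph 3 (lattice choice and conclusion).} Continuous characters $G_\Q\to\overline{\F}_q^\times$ that factor through $\Gamma$ (tracking the $\Delta$-component separately) are exactly the mod-$\varpi$ powers of the cyclotomic character. The required vanishings therefore amount to the statement that no Jordan--H\"older factor of $T/\varpi$ is of the form $\F_q(r)$, which by Paragraph~1 holds for both $\psi_1$ and $\psi_2$. When $\overline{\rho}$ is semisimple, any Galois-stable lattice $T$ works; when $\overline{\rho}$ is a non-split extension, the two commensurability classes of lattices realize $\psi_1,\psi_2$ as sub and quotient of $T/\varpi$ in opposite orders, but both choices meet the criterion since neither character is of the excluded cyclotomic form. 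The main obstacle is setting up the depth/reflexivity criterion cleanly in the Iwasawa setting and verifying that the cyclotomic twists $\F_q(r)$ really do exhaust the obstructions produced by the $\Gamma$-specializations of $\Lambda^\iota$; once this is in place, the hypothesis provides exactly the input needed to conclude.
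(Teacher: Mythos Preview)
Your reduction to a depth-$2$ criterion is the right shape, and the endpoint---vanishing of $H^0(G_\Sigma(\Q),(T/\varpi)(r))$ for every $r$---is exactly what the paper checks. The genuine gap is in how you propose to verify that vanishing. Your Paragraph~1 symmetry claim is false in general: $\det\rho=\epsilon\cdot\chi_{\text{cyc}}^{k-1}$, and the reduction $\bar\epsilon$ of the Nebentypus need not be a power of the mod-$p$ cyclotomic character (for instance, $\bar\epsilon$ can be ramified at primes $\ell\mid N$ with $\ell\neq p$, while every $\F_q(r)$ is unramified there). So from the hypothesis that \emph{one} factor $\psi$ avoids the form $\F_q(r)$ you cannot conclude the other does. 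Since your criterion in Paragraphs~2--3 explicitly demands that \emph{no} Jordan--H\"older factor of $T/\varpi$ be a cyclotomic twist of the trivial character (the $\mathbf{H}^0(T)$ side constrains the sub, and the Tate-dual side constrains the quotient), the argument does not close.

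What the paper does instead is exploit exactly the piece of structure you discard: it chooses $T$ so that $T/\varpi$ is a \emph{non-split} extension $0\to\psi\to T/\varpi\to\psi'\to 0$ with the hypothesized non-cyclotomic $\psi$ sitting as the sub. Then a nonzero class in $H^0((T/\varpi)(r))$ either lies in $\psi(r)$ (forcing $\psi=\F_q(-r)$, excluded by hypothesis) or maps isomorphically onto $\psi'(r)$ and splits the sequence, which is impossible. Thus only the one character needs to avoid cyclotomic twists, and the detour through the Tate dual---which is precisely what forces the unwanted condition on the second character---is avoided entirely. In short, the missing idea is that the non-split lattice choice, not a symmetry of the two characters, is what makes the $H^0$ vanish.
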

\begin{proof}
    It is known that we may choose a Galois-stable lattice $T$ of $V$ whose residual representation fits into a non-split short exact sequence:
    $$0\to \psi\to T/\varpi\to \psi'\to 0$$
    where $\psi'$ is the other character occuring in the semisimplification. 

    Let $(x,\varpi)$ be a maximal ideal of $\Lambda$. Kato shows that multiplication by $x$ is injective for $\mathbf{H}^1(T)$. So it suffices to show that $\varpi: \mathbf{H}^1(T)/x\mathbf{H}^1(T)\to \mathbf{H}^1(T)/x\mathbf{H}^1(T)$ is injective.

    Recalling that $\mathbf{H}^1(T) = H^1(G_\Sigma(\Q),T\otimes \Lambda)$, the short exact sequence:
    \begin{center}
        \begin{tikzcd}
            0\arrow[r] & T\otimes \Lambda \arrow[r,"\cdot x"] & T\otimes \Lambda \arrow[r] & T\otimes\Lambda/x \arrow[r]  & 0\\
        \end{tikzcd}
    \end{center}
    yields the following inclusion upon taking cohomology:
    $$\mathbf{H}^1(T)/x\mathbf{H}^1(T)\subseteq H^1(G_\Sigma(\Q), T\otimes \Lambda/x)$$
    It now suffices to show that $H^1(G_\Sigma(\Q), T\otimes \Lambda/x)$ is $\varpi$-torsion free. 
    The exact sequence:
    \begin{center}
        \begin{tikzcd}
            0\arrow[r] & T\otimes \Lambda/x \arrow[r,"\cdot \varpi "] & T\otimes \Lambda/x \arrow[r] & T\otimes\Lambda/(x,\varpi ) \arrow[r]  & 0\\
        \end{tikzcd}
    \end{center}
    shows that the kernel of multiplication by $\varpi$ on $H^1(G_\Sigma(\Q), T\otimes \Lambda/x)$ comes from $H^0(T\otimes \Lambda/(x,\varpi))$. But we know that $\Lambda/(x,\varpi)\cong \F_q(r)$ for some $r$, and hence
    $$T\otimes \Lambda/(x,\varpi)\cong (T/\varpi)(r)$$
    By our hypothesis on $\psi$ and $T/\varpi$, this has no fixed points.
\end{proof}

Therefore, throughout the rest of this section, we may assume $\overline{\rho}^{ss}=\F_q\oplus \F_q(r)$. We shall extend Kato's result to a general ordinary newform under a mild hypothesis. In order to state the hypothesis, we need to recall the following result due independently to Carayol and Livné.

\begin{theorem}[Carayol \cite{carayol1989representations}, Livné \cite{livne1989conductors}]
   Let $\ell\neq p$ be a prime, and suppose the exponent of $\ell$ in the (Artin) conductor of $\overline{\rho}^{ss}$ is less than the exponent of $\ell$ in the conductor of $\rho$. Then the restriction $\rho|_{D_\ell}$ of $\rho$ to the decomposition group at $\ell$ takes one of the following forms:
   \begin{enumerate}
       \item Decomposable: $\rho|_{D_\ell}\cong \chi_1\oplus \chi_2$;
       \item Special: $\rho|_{D_\ell} \cong \chi \otimes \text{Sp}(2)$, where $\chi$ is a possibly ramified character, whose reduction modulo $\varpi$ is trivial, and $\text{Sp}(2)$ is isomorphic to a ramified extension of unramified characters;
       \item Cuspidal: $\rho|_{D_\ell}=\text{ind}(\text{Gal}(\overline{\Q}_\ell/\Q_\ell), \text{Gal}(\overline{\Q}_\ell/K), \chi)$, where $K$ is the unramified quadratic extension of $\Q_\ell$, and $\chi$ is a character of $\overline{\Q}_\ell/K$
   \end{enumerate}
\end{theorem}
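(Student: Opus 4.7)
The plan is to combine the classification of two-dimensional $\ell$-adic representations of $D_\ell$ (for $\ell\neq p$) with a careful analysis of how the Artin conductor behaves under reduction modulo $\varpi$ followed by semisimplification. Since $\ell\neq p$, Grothendieck's $\ell$-adic monodromy theorem attaches to $\rho|_{D_\ell}$ a Weil--Deligne representation $(\rho',N)$, in which $\rho'$ is a semisimple representation of the Weil group $W_\ell$ whose image on the inertia subgroup $I_\ell$ is finite, and $N$ is a nilpotent endomorphism of $V$ satisfying the standard commutation relation with the Weil group action.

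The first step is a structural classification of two-dimensional Weil--Deligne representations. Since $N$ is a nilpotent $2\times 2$ matrix, either $N=0$ (so $\rho|_{D_\ell}$ is itself semisimple) or $N$ has rank one. In the latter case, the commutation relation forces $\rho'$ to be a sum of two characters of $W_\ell$ differing by the unramified cyclotomic twist, and an eigenvector of $N$ furnishes an unramified subrepresentation, which yields the special case $\chi\otimes \text{Sp}(2)$. If $N=0$ and $\rho|_{D_\ell}$ further splits into two characters, we are in the decomposable case. Otherwise $\rho|_{D_\ell}$ is an irreducible semisimple representation with finite image on inertia, and by Clifford theory applied to a sufficiently small open normal subgroup, it must be induced from a character of a quadratic extension $K$ of $\Q_\ell$.

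The second step is to use the conductor-drop hypothesis to pin down the precise statements and rule out stray sub-cases. Recall that the Artin exponent decomposes as a tame part $\dim V/V^{I_\ell}$ plus a Swan (wild) contribution, and that passing from $\rho$ to $\overline{\rho}^{ss}$ can lose conductor only through (i) an extension class collapsing under semisimplification, or (ii) ramification in some character or inducing character becoming trivial modulo $\varpi$. In the special case, this pins the twist to a character whose reduction on inertia is trivial, matching the statement of the theorem. In the cuspidal case, one observes that inductions from ramified quadratic extensions carry wild ramification that persists in the residual semisimplification, so the drop forces $K/\Q_\ell$ to be the unramified quadratic extension. In the decomposable case, the drop is traced to a tame ramified character whose reduction modulo $\varpi$ is unramified.

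The principal obstacle is the bookkeeping in the cuspidal case: one must simultaneously track the ramification of the inducing character $\chi$, of the extension $K/\Q_\ell$, and of their interplay under induction to $D_\ell$, in order to argue that only the unramified quadratic case survives the conductor-drop hypothesis. Carefully separating the contributions of $\chi$ and of $K$ to the Swan conductor of $\rho$ and its residual semisimplification, and showing that ramified $K$ cannot arise, is where I would expect to spend the most effort.
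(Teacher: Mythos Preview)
The paper does not prove this theorem. It is stated as a result due to Carayol and Livn\'e, with references to \cite{carayol1989representations} and \cite{livne1989conductors}, and is used as a black box; no argument is given in the paper itself. So there is no ``paper's own proof'' against which to compare your proposal.

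That said, a brief comment on your sketch as an independent attempt. The two-step plan (classify two-dimensional Weil--Deligne representations via Grothendieck's monodromy theorem, then use the conductor-drop hypothesis to pin down the extra constraints on $\chi$ in the special case and on $K$ in the cuspidal case) is the right shape and is essentially how the original papers proceed. However, your justification in the cuspidal case is not quite right: you write that inductions from \emph{ramified} quadratic extensions ``carry wild ramification that persists in the residual semisimplification,'' but for odd $\ell$ the ramified quadratic extension of $\Q_\ell$ is tamely ramified, so there is no wild part to persist. The actual exclusion of the ramified quadratic case requires a finer comparison of the tame conductor exponents of $\mathrm{ind}_K^{\Q_\ell}\chi$ and of its residual semisimplification, tracking how the sign character of $K/\Q_\ell$ and the restriction $\chi|_{I_\ell}$ interact; this is exactly the ``bookkeeping'' you flag as the principal obstacle, and it is genuinely where the work lies. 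If you intend to supply a full proof rather than cite the result, that step needs to be done carefully rather than waved through.
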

\begin{remark}
    The roles of $\ell$ and $p$ are opposite in the original papers of Carayol and Livné. One can also state more precise conditions on the characters occurring in cases one and three; we will not need this. 
\end{remark}

We will show that Kato's result holds whenever case $2$ of the above classification occurs. In particular, we must assume that $\rho$ is ramified. If the second case occurs for some prime $\ell\neq p$, we will say that $\ell$ is special for $f$. We will see later that this holds whenever $f$ corresponds to an abelian variety. Assuming there exists some $\ell$ special for $f$, one knows that $\rho|_{D_\ell}$ can be written as an upper-triangular matrix in some basis. One also knows that $\overline{\rho}$ can be written as an upper-triangular matrix in some basis. The following lemma shows, in some sense, that we can choose a single basis compatible with both of these descriptions.
\begin{lemma}
    Let $\ell$ be special for $f$. There exists an ordered pair $(v_1,v_2)$ in $V$, such that:
    \begin{itemize}
        \item the $\mathcal{O}$-span of $v_1,v_2$ is a $G_\Sigma(\Q)$-stable lattice $T'$ of $V$;
        \item the action of $D_\ell$ on $T'$ is upper-triangluar with respect to the ordered basis $(v_1,v_2)$;
        \item the action of $G_\Sigma(\Q)$ on $T'/\varpi$ is upper-triangular with respect to the ordered basis $(\overline{v}_1,\overline{v}_2)$.
    \end{itemize}
    Moreover, we may also insist that the lattice $T$ spanned by $(v_1,\varpi v_2)$ is also $G_\Sigma(\Q)$-stable, and the residual representation of $T$ is not semisimple.
\end{lemma}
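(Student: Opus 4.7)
The plan is to construct $T'$ and the basis $(v_1,v_2)$ via the tree of $G_\Sigma(\Q)$-stable lattices in $V$. First, I fix any $G_\Sigma(\Q)$-stable lattice $T_0 \subset V$ and choose $v_1 \in T_0$ to be a primitive generator of the rank-one submodule $L \cap T_0$, where $L$ is the unique $D_\ell$-stable line of $V$ (coming from the Steinberg structure of $\rho|_{D_\ell}$). Extending $v_1$ to an $\mathcal O$-basis $(v_1, w)$ of $T_0$, the $D_\ell$-action is automatically upper-triangular in this basis, so condition (2) of the lemma is free.

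The heart of the argument is to modify $T_0$ so that its residual representation becomes split semisimple, with $\bar v_1$ spanning one of the two stable lines. Since $\bar\rho^{ss}=\F_q\oplus \F_q(r)$ has multiplicity-free constituents, the set of $G_\Sigma(\Q)$-stable lattice classes in $V$ forms a finite convex sub-interval $\mathcal{I}$ of a geodesic in the Bruhat--Tits tree: at interior vertices of $\mathcal{I}$ the residual representation is split (with the two stable lines corresponding to the two edges of $\mathcal{I}$ incident to the vertex), while at the endpoints (if any) the residual representation is non-semisimple with a unique stable line. Under the hypothesis that $\ell$ is special, the isomorphism $\rho|_{D_\ell}\cong \chi\otimes\mathrm{Sp}(2)$ with $\chi \equiv 1 \pmod{\varpi}$ implies that $L$ determines an end of the Bruhat--Tits tree along which $\mathcal{I}$ is oriented. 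I will argue from this that $\mathcal{I}$ contains at least one interior vertex, and that at an appropriate interior vertex $[T']$ adjacent to the endpoint of $\mathcal{I}$ opposite to $L$, the reduction of $L$ coincides with one of the two stable lines. Taking $v_2\in T'$ primitive so that $\bar v_2$ spans the other stable line then verifies conditions (1)--(3).

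For the moreover conditions, the construction is designed so that $T = \mathcal O v_1 + \mathcal O \varpi v_2$ is precisely the endpoint of $\mathcal{I}$ adjacent to $[T']$ in the $\bar v_2$-direction. Since $T'/T \cong \F_q$ is represented by the stable line $\bar v_2 \F_q$, the lattice $T$ is $G_\Sigma(\Q)$-stable; and as an endpoint of $\mathcal{I}$, its residual representation is non-semisimple as required.

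The main obstacle is the geometric claim about the interval $\mathcal{I}$: specifically, that the special hypothesis forces $\mathcal{I}$ to have at least one interior vertex (equivalently, length at least two) and that the end of the Bruhat--Tits tree determined by $L$ aligns with one of the ends of the geodesic containing $\mathcal{I}$. This rests on the non-triviality of the Steinberg extension at $\ell$—which supplies the required ``depth'' of $\mathcal{I}$ along the $L$-ray in the tree—combined with the fact that the twisting character $\chi$ reduces trivially, ensuring that this local depth survives when cut out by the global $G_\Sigma(\Q)$-stability condition. Verifying this alignment rigorously, by tracking how the cocycle defining the Steinberg extension interacts with adjacent lattices, is the key technical step.
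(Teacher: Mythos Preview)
Your approach via the Bruhat--Tits tree is the same as the paper's: both use that the $D_\ell$-stable homothety classes form a ray $X_\ell$ (since $\rho|_{D_\ell}$ is indecomposable with unique stable line $L$), that the $G_\Sigma(\Q)$-stable classes form a bounded segment $\mathcal I$, and that $\mathcal I\subset X_\ell$ simply because $D_\ell\subset G_\Sigma(\Q)$. This last containment is what aligns $L$ with $\mathcal I$ for free, and it makes your proposed ``key technical step'' of tracking the Steinberg cocycle through adjacent lattices unnecessary. The paper then takes $T'$ to be the penultimate vertex of $\mathcal I$ in the $L$-direction and $T$ the terminal one, so that $v_1\in L$ automatically lies in $T$ and hence $\bar v_1$ spans the stable line $\ker(T'/\varpi\to T'/T)$.

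Your plan, however, has a genuine gap and an orientation error. You insist that $T'$ be an \emph{interior} vertex of $\mathcal I$ (so that $T'/\varpi$ is split with two stable lines), and claim the Steinberg hypothesis ``supplies the required depth'' for such a vertex to exist. It does not: the special hypothesis forces $X_\ell$ to be a ray rather than a full line, but it places no lower bound on $|\mathcal I|$ beyond the bound $|\mathcal I|\ge 2$ coming from reducibility of $\bar\rho$; the global segment can perfectly well have exactly two vertices and no interior point. Since the lemma only asks for upper-triangularity (one stable line through $\bar v_1$), the interior-vertex requirement is also unnecessary. Separately, with $v_1\in L$ the lattice $T=\mathcal O v_1+\varpi\mathcal O v_2$ lies one step \emph{toward} the end $L$ from $T'$, not opposite to it, so the endpoint at which you place $T$ must be the $L$-side endpoint of $\mathcal I$ and $T'$ its immediate neighbour. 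Once you drop the interior-vertex requirement and correct the direction, your argument coincides with the paper's.
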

\begin{proof}
    Consider the set $S$ of equivalence classes of lattices (not necessarily Galois-stable) in $V$, up to homothety. We give $S$ the structure of a graph by drawing an edge between distinct nodes $[A],[A']$ if, for some choice of representatives, we have
    $$\varpi A\subset A' \subset A$$
    Let $X$ be the subgraph of $S$ consisting of those classes of lattices which are stable under $G_\Sigma(\Q)$. Likewise, let $X_\ell$ be the the subgraph of $S$ consisting of classes stable under $D_\ell$. Since the determinant of the residual representation is odd, the characters occuring in the semisimplification must be distinct. Hence, one knows (see \cite{bellaiche2009ribet}) that $X_\ell$ is a ray, and that $X$ appears as a bounded subgraph of $X_\ell$. We may represent this situation pictorially:
    $$\overbrace{[A_0]\to [A_1]\to \cdots \to [A_{i-1}]\to  \underbrace{[A_i]\to \cdots \to [A_{i+k}]}_{G_\Sigma(\Q)\text{-stable segment}}\to [A_{i+k+1}]\to \cdots }^{D_\ell\text{-stable ray}}$$
    Renaming representatives if necessary, we will assume we have inclusions $A_j\supseteq A_{j+1}$.
    Notice in particular that since the residual representation of $\rho$ is reducible, the $G_{\Sigma}(\Q)$-stable line segment has more than one node. Let $(b_1,b_2)$ be an ordered basis of $A_0$, for which the action of $D_\ell$ is upper-triangular. Then $(b_1,\varpi^{i+k-1}b_2)$ is a basis of $A_{i+k-1}$. We set $(v_1,v_2)=(b_1,\varpi^{i+k-1} b_2)$, so $T'=A_{i+k-1}$. By our choice of basis for $A_0$, the $\mathcal{O}$-span of $b_1$ is $D_\ell$-stable. It follows that in the basis $(b_1,\varpi^{i+k-1} b_2)$, the action of $D_\ell$ on $T'$ is upper-triangular. Since $A_{i+k-1}$ is in the $G_\Sigma(\Q)$-stable segment, this shows the first two bullet points.

    As for the last point, notice that the inclusion $A_{i+k}\subset A_{i+k-1}$ yields a short exact sequence of $G_\Sigma(\Q)$-modules:
    $$0\to A_{i+k}\to A_{i+k-1}\to \psi\to 0$$
    where $\psi$ is some representation of $G_\Sigma(\Q)$ on $\F_q$. Notice that $v_1$ is in the kernel of the projection $A_{i+k-1}\to \psi$. In turn, this projection factors through $A_{i+k-1}=T'\to T'/\varpi \to \psi$. Thus $\overline{v}_1$ is in the kernel of $T'/\varpi\to \psi$. Counting $\F_q$-dimensions shows the span of $\overline{v}_1$ is precisely equal to the kernel, hence the span is $G_\Sigma(\Q)$-stable. In other words, the Galois-action on $T'/\varpi$ is upper-triangular in the basis $(\overline{v}_1,\overline{v}_2)$. Finally, we take $T=A_{i+k}$, it is still in the $G_\Sigma(\Q)$-stable segment, and since it is on the boundary of this segment, its residual representation cannot be semisimple.
\end{proof}
Fix $T,T'$ as in the lemma above. Let $\psi$ be the character occuring as a sub-$G_\Sigma(\Q)$-module of $T/\varpi$. The proof of theorem $1$ works for every maximal ideal of $\Lambda$, except the unique maximal ideal $(x,\varpi)$ for which $\psi \otimes \Lambda/(x,\varpi)=\F_q$. With such $(x,\varpi)$, the image in $\mathbf{H}^1(\psi)=H^1(G_\Sigma(\Q),\psi \otimes \Lambda)$ under the connecting homomorphism for
    \begin{center}
        \begin{tikzcd}
            0 \arrow[r] & \psi \otimes \Lambda \arrow[r,"\cdot x"] & \psi \otimes \Lambda \arrow[r] & \psi\otimes \Lambda/x\cong \psi\otimes \Lambda/(\varpi,x)\cong \F_q \arrow[r] & 0
        \end{tikzcd}
    \end{center}
    is characterized by the following property:
\begin{proposition}
    An element $\{\gamma_n\}_{n\ge 0}\in \mathbf{H}^1(\psi)$ is in the image of the connecting homomorphism if and only if, for every $n\ge 1$, we have $\text{res}(\gamma_n)=0$, where $\text{res}$ denotes the restriction from $G_\Sigma(\Q(\zeta_{p^n}))$ to $G_\Sigma(\Q(\zeta_{p^{n+1}}))$. 
\end{proposition}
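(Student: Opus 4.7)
My plan is to identify the image of $\delta$ with the $x$-torsion submodule $\mathbf{H}^1(\psi)[x]$---the kernel of multiplication by $x$, by the long exact sequence---and then match this submodule with the compatible systems $\{\gamma_n\}$ whose restrictions vanish at every positive level. Shapiro's lemma provides an isomorphism $\mathbf{H}^1(\psi)\cong\varprojlim_n H^1(H_n,\psi)$ (inverse limit under corestriction, with $H_n:=G_\Sigma(\Q(\zeta_{p^n}))$) under which $\Lambda$-multiplication translates into the natural $\mathcal{O}[\Gamma_n]$-action on $H^1(H_n,\psi)$ by conjugation twisted by $\psi$. Since $\psi\otimes\Lambda/(x,\varpi)\cong\F_q$ as Galois modules, $\psi$ factors through $\Gamma$ and, having order prime to $p$, is trivial on the pro-$p$ part $\Gamma'$; thus $\psi|_{H_n}=1$ and $\psi^{H_{n+1}}=\F_q$ with trivial $H_n/H_{n+1}$-action for every $n\geq 1$. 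On the $\Delta$-isotypic component of $\Lambda$ cut out by $(x,\varpi)$---where $\Delta$ denotes the prime-to-$p$ part of $\Gamma$---one can take $x=\widetilde\gamma-1$ for a topological generator $\widetilde\gamma$ of $\Gamma'$.

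For the reverse implication, I assume $\mathrm{res}(\gamma_n)=0$ for every $n\geq 1$. Inflation-restriction for $H_{n+1}\lhd H_n$, combined with $\psi^{H_{n+1}}=\F_q$, gives $\gamma_n=\mathrm{inf}(\kappa_n)$ for some $\kappa_n\in H^1(H_n/H_{n+1},\F_q)$. A direct computation shows such inflated classes are killed by $\widetilde\gamma-1$: conjugation by any lift of $\widetilde\gamma$ descends to the abelian group $\Gamma_{n+1}$ and hence acts trivially on the subgroup $H_n/H_{n+1}$, while $\psi(\widetilde\gamma)=1$ disposes of the $\psi$-twist. Thus $x\cdot\gamma_n=0$ at every level, and $x$ annihilates the compatible system.

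For the forward implication, I assume $x\cdot\{\gamma_n\}=0$. Then each $\gamma_{n+1}$ is fixed by $\widetilde\gamma$ acting through $\Gamma_{n+1}$. A generator $\tau$ of $H_n/H_{n+1}\cong\Z/p$ embeds into $\Gamma_{n+1}$ as a power of $\widetilde\gamma$ inside its pro-$p$ part, so $\tau$ also fixes $\gamma_{n+1}$. Using corestriction-compatibility $\gamma_n=\mathrm{cor}_{H_{n+1}}^{H_n}(\gamma_{n+1})$ together with the Mackey formula (valid because $H_{n+1}\lhd H_n$), I compute
\[
\mathrm{res}(\gamma_n)\;=\;\mathrm{res}\circ\mathrm{cor}(\gamma_{n+1})\;=\;\sum_{i=0}^{p-1}\tau^i\gamma_{n+1}\;=\;p\,\gamma_{n+1}\;=\;0,
\]
the desired vanishing.

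The main obstacle is the Shapiro bookkeeping: verifying that $\Lambda$-multiplication transports to conjugation with the correct $\psi$-twist on each $H^1(H_n,\psi)$, and isolating the appropriate $\Delta$-isotypic component so that $x$ acts as $\widetilde\gamma-1$ and the finite-level inflation-restriction and Mackey computations above become meaningful; the remaining steps are then essentially formal.
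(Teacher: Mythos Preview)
Your argument is correct and close in spirit to the paper's, but the two diverge in the reverse implication. Both of you begin from the long exact sequence to place the image of $\delta$ inside $\mathbf{H}^1(\psi)[x]$, and both run the forward direction the same way: for an $x$-torsion system, $\gamma_{n+1}$ is fixed by $H_n/H_{n+1}$, so $\mathrm{res}(\gamma_n)=\mathrm{res}\circ\mathrm{cor}(\gamma_{n+1})=p\,\gamma_{n+1}=0$.

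For the converse, the paper does not prove directly that the restriction condition forces $x$-torsion. Instead it argues by cardinality: it shows $\delta$ is injective (because $\mathbf{H}^0(\psi)$ has trivial inverse limit under multiplication-by-$p$ corestrictions), so the image has exactly $q$ elements; then inflation--restriction bounds the classes with $\mathrm{res}(\gamma_n)=0$ by $|H^1(H_n/H_{n+1},\F_q)|=q$ at each level, hence at most $q$ compatible systems satisfy the condition, and the two sets coincide. Your route is more structural: you use the same inflation--restriction step to write $\gamma_n=\mathrm{inf}(\kappa_n)$ and then observe that conjugation by a lift of $\widetilde\gamma$ is trivial on the abelian quotient $H_n/H_{n+1}\subset\Gamma_{n+1}$, while $\psi(\widetilde\gamma)=1$ since $\psi$ has prime-to-$p$ order; hence $(\widetilde\gamma-1)\gamma_n=0$ for $n\ge1$ and, by corestriction, for $n=0$ as well. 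Combined with the equality $\mathrm{im}(\delta)=\mathbf{H}^1(\psi)[x]$ from the long exact sequence, this finishes without any counting.

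Your approach is cleaner in that it avoids the separate injectivity-of-$\delta$ step and the size comparison, at the cost of the Shapiro bookkeeping you flag (transporting the $\Lambda$-action to the conjugation-plus-$\psi$-twist action on each $H^1(H_n,\psi)$ and isolating the correct $\Delta$-component so that $x$ really acts as $\widetilde\gamma-1$). That bookkeeping is standard and your outline of it is accurate, so there is no genuine gap.
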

\begin{proof}
    In fact, we know that the image lies in $\mathbf{H}^1(\psi)[x]$. For $n\ge 1$, by \cite[Corollary~1.5.7]{neukirch2013cohomology} we have:
    $$\text{res}\circ \text{cor} (\gamma_{n+1})=\text{res}(\gamma_n)=N(\gamma_{n+1})$$
    where $N$ is the norm map:
    $$N(\gamma_{n+1})=\sum_{\sigma\in G} \sigma(\gamma_{n+1})$$
    where $G=\text{Gal}(\Q(\zeta_{p^{n+1}})/\Q(\zeta_{p^n}))$.
    But since $(\gamma_n)_{n\in \N}$ is $x$-torsion, and since $n\ge 1$, we know $\gamma_n$ is fixed by the various $\sigma\in G$. Thus, the norm map is just multiplication by $p$. It follows that $\text{res}(\gamma_n)=0$ for $n\ge 1$.

    It remains to see that every element of $\mathbf{H}^1(\psi)$ satisfying this condition is in the image. First, notice the connecting homomorphism is injective. Indeed, its kernel is isomorphic to a quotient of $\mathbf{H}^0(\psi)$, but the correstriction map on $0$-th cohomology is multiplication by $p$, so this group is trivial. It follows that the image has size $q$, and it suffices to show that no more than $q$ elements of $\mathbf{H}^1(\psi)$ satisfy the conditions of the proposition. 

    This follows from inflation-restriction. First, notice that $\psi$ is either $\F_q$, or $\F_q(r)$, which are both trivial after restricting to $\Q(\zeta_{p^n})$. Hence, any $\gamma_n$ whose restriction vanishes must come from the inflation of a class in $H^1(\Q(\zeta_{p^{n+1}})/\Q(\zeta_{p^n}),\F_q)$. Since this group is cyclic, there are only $q$ such cohomology classes. 
\end{proof}
In particular, the connecting homomorphism for that exact sequence is injective.
Before stating the next proposition, we note that $\psi$ fits into the exact sequence:
$$0\to T\to T'\to \psi \to 0$$
Indeed, if this were not the case then $\psi$ would be forced to occur as a quotient representation of $T/\varpi$ instead of a subrepresentation. As $T/\varpi$ is a nonsplit extension of distinct characters, this is impossible.
\begin{proposition}
     Let $\{\gamma_n\}_{n\ge 0}$ be a nontrivial element of $\mathbf{H}^1(\psi)$ satisfying the property from the previous proposition. Then its image in $\mathbf{H}^2(T)=H^2(G_\Sigma(\Q),T\otimes \Lambda)$ under the connecting homomorphism from
     $$0\to T\otimes \Lambda \to T' \otimes \Lambda \to \psi\otimes \Lambda \to 0$$
     is nontrivial.
\end{proposition}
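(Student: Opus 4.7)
The plan is to exhibit the short exact sequence $0 \to T \to T' \to \psi \to 0$ as a pullback of the Bockstein sequence for $T$, thereby rewriting the desired nonvanishing of the connecting map applied to $\{\gamma_n\}$ as a lifting obstruction modulo $\varpi$, which can then be detected after further reduction modulo the maximal ideal $(x,\varpi)$.

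First, using the bases $(v_1, v_2)$ of $T'$ and $(v_1, \varpi v_2)$ of $T$ furnished by the preceding lemma, one verifies that multiplication by $\varpi$ maps $T'$ into $T$ and fits into the commutative morphism of short exact sequences
\begin{center}
\begin{tikzcd}[column sep=small]
0 \arrow[r] & T \arrow[r] \arrow[d, "\operatorname{id}"'] & T' \arrow[r] \arrow[d, "\cdot \varpi"] & \psi \arrow[r] \arrow[d, "\iota"] & 0 \\
0 \arrow[r] & T \arrow[r, "\cdot \varpi"] & T \arrow[r] & T/\varpi \arrow[r] & 0,
\end{tikzcd}
\end{center}
where $\iota$ is the canonical embedding of $\psi$ as the sub of $T/\varpi$ identified in the lemma (so that $\iota$ sends the generator of $\psi=T'/T$ to the class of $\overline{\varpi v_2}$). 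Tensoring with $\Lambda$ (which is $\mathcal{O}$-flat) and taking $G_\Sigma(\Q)$-cohomology yields a commutative square
\begin{center}
\begin{tikzcd}
\mathbf{H}^1(\psi) \arrow[r, "\delta'"] \arrow[d, "\iota_*"'] & \mathbf{H}^2(T) \arrow[d, "\operatorname{id}"] \\
\mathbf{H}^1(T/\varpi) \arrow[r, "\delta''"] & \mathbf{H}^2(T),
\end{tikzcd}
\end{center}
whose bottom arrow $\delta''$ is the Iwasawa Bockstein associated to $0 \to T\otimes\Lambda \xrightarrow{\varpi} T\otimes\Lambda \to (T/\varpi)\otimes\Lambda \to 0$. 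Since the kernel of $\delta''$ is exactly the image of the reduction $\mathbf{H}^1(T) \to \mathbf{H}^1(T/\varpi)$, the desired nonvanishing of $\delta'(\{\gamma_n\})$ is equivalent to showing that $\iota_*(\{\gamma_n\})$ does not lift to $\mathbf{H}^1(T)$.

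I would then argue by contradiction: suppose $\iota_*(\{\gamma_n\})$ is the mod-$\varpi$ reduction of some $\{\tilde\gamma_n\} \in \mathbf{H}^1(T)$, and descend both classes further modulo the maximal ideal $(x,\varpi)$. By the choice of $(x,\varpi)$ we have $\psi \otimes \Lambda/(x,\varpi) \cong \F_q$ with trivial $G_\Sigma(\Q)$-action, and $(T/\varpi) \otimes \Lambda/(x,\varpi) \cong (T/\varpi) \otimes \psi^{-1}$ with its induced non-split filtration. The previous proposition identifies the image of $\{\gamma_n\}$ in $H^1(G_\Sigma(\Q), \F_q)$ as the unique (up to scalar) nonzero restriction-vanishing class, inflated from $\operatorname{Gal}(\Q(\zeta_{p^{n+1}})/\Q(\zeta_{p^n}))$. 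The assumed lift would force this distinguished class, pushed along $\F_q \hookrightarrow (T/\varpi) \otimes \psi^{-1}$, to come from $H^1(G_\Sigma(\Q), T \otimes \Lambda/(x,\varpi))$.

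The main obstacle is ruling out this residual lift. The plan is to exploit the non-split structure $0 \to \psi \to T/\varpi \to \psi' \to 0$, whose nonzero extension class in $H^1(G_\Sigma(\Q), \psi \otimes (\psi')^{-1})$ governs the obstruction to lifting along $\iota$ via cup product. Combining the explicit inflation-restriction description of the distinguished class with this cup-product description of the obstruction should show that the obstruction is nonzero, producing the contradiction. The delicate point is matching up the two descriptions at the level of cocycles, since cancellation could in principle occur; carrying this out carefully is where the real work lies.
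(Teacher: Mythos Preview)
Your commutative square identifying $\delta'$ with $\delta''\circ\iota_*$ is correct; in fact this is exactly the reduction the paper itself carries out in the proof of Theorem~7 (there written as $b\circ a=\delta_2\circ\delta_1$). The gap is in what comes after: you propose to rule out the lift by pairing the distinguished class against the extension class of $T/\varpi$, but you never supply a mechanism that forces this pairing to be nonzero, and you do not use the special prime $\ell$ at all.

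Two problems. First, the obstruction $\delta'(\{\gamma_n\})$ is cup product with the extension class of $0\to T\to T'\to\psi\to 0$, an element of $H^1(G_\Sigma(\Q),T\otimes\psi^{-1})$; it is not a priori the extension class of $T/\varpi$ in $H^1(G_\Sigma(\Q),\psi\otimes(\psi')^{-1})$, and you give no argument relating the two beyond hoping they ``match up at the level of cocycles.'' Second, and more seriously, even granting some such identification, the distinguished class $\{\gamma_n\}$ is the inflation of a generator of $H^1(\operatorname{Gal}(\Q(\zeta_{p^{n+1}})/\Q(\zeta_{p^n})),\F_q)$: it is a class living entirely at $p$, unramified elsewhere. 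The nonsplitness of $T/\varpi$ is a global datum with no specified local behaviour. Nothing in your outline explains why their cup product should land nontrivially in a global $H^2$; by Poitou--Tate one must produce a place at which to detect it, and you name none. Your own caveat that ``cancellation could in principle occur'' is precisely the point: without further input it \emph{can}, and the hypothesis that $\ell$ is special is in the paper exactly to prevent this.

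The paper's proof supplies the missing local input. It restricts to the decomposition group at a prime $\mathfrak{l}\mid\ell$ of $\Q(\zeta_{p^n})$, chosen so that $\mathfrak{l}$ is inert in the next cyclotomic layer and $T'/\varpi^k$ is ramified at $\mathfrak{l}$ for the minimal such $k$. There the connecting map becomes cup product with a \emph{ramified} class in $H^1(\Q(\zeta_{p^n})_\mathfrak{l},\mathcal{O}/\varpi^k)$, while $\gamma_n$ restricts to a nonzero \emph{unramified} class. Local Tate duality then forces the cup product to be nonzero (unramified classes pair trivially with each other, so a unit unramified class must pair nontrivially with any ramified one), and a short check on $H^2$ of inertia, using the explicit description of the kernel $K|_{I_\mathfrak{l}}$ as a split extension, shows the image survives in $H^2(\Q(\zeta_{p^n})_\mathfrak{l},T)$. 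Your global approach contains no analogue of this step.
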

\begin{proof}
    First, notice that \cite[Proposition~1.4.2]{neukirch2013cohomology} allows us to twist by the character $\chi^{-1}$ as in the second case of the classification. Therefore, we will assume without loss of generality that $T'|_{D_\ell}$ is a ramified extension of two unramified characters. In fact (see \cite{ribet1994report}), we may twist so that the characters occuring are the trivial character, and the cyclotomic character.

    Since $T'$ is equal to $\varprojlim_k T'/\varpi^k$, and $T'$ is ramified at $\ell$, we are allowed to choose $k$ minimal such that $T'/\varpi^k$ is ramified at $\ell$. 
    
    Recall that there are only finitely many primes of $\Q(\zeta_{p^\infty})$ above $\ell$. Thus, we may choose some $n>k$ large enough that the primes of $\Q(\zeta_{p^n})$ above $\ell$ remain inert in the extension $\Q(\zeta_{p^{n+1}})/\Q(\zeta_{p^n})$. Fix such an $n$. Since the action of $G_\Sigma(\Q)$ on $T'/\varpi^k$ is ramified at $\ell$, and since $\Q(\zeta_{p^n})/\Q$ is unramified away from $p$, we may choose a prime $\mathfrak{l}$ of $\Q(\zeta_{p^n})$ above $\ell$ such that the action of $G_\Sigma(\Q(\zeta_{p^n}))$ on $T'/\varpi^k$ is ramified at $\mathfrak{l}$.

    To prove the proposition, it suffices to project onto the $n$-th factor of $\mathbf{H}^1(\psi)$ and to restrict to the decomposition group at $\mathfrak{l}$, then prove that the image of $\gamma_n$ under the connecting homomorphism for
    $$0\to T\to T'\to \psi \to 0$$
    is nontrivial in $H^2(\Q(\zeta_{p^n})_\mathfrak{l},T)$, where the exact sequence is regarded as a sequence of $G(\Q(\zeta_{p^n})_{\mathfrak{l}})$-modules. 
    We have a projection $T'/\varpi^k\to \psi $. Write $K$ for the kernel of this projection. We have the following diagram (of $G(\Q(\zeta_{p^n})_{\mathfrak{l}})$-modules):
    \begin{center}
        \begin{tikzcd}
            0 \arrow[r] & T \arrow[r] \arrow[d] & T' \arrow[r] \arrow[d] & \psi \arrow[r] \arrow[d,"="] & 0\\
            0 \arrow[r] & K \arrow[r] & T'/\varpi^k \arrow[r] & \psi \arrow[r] & 0
        \end{tikzcd}
    \end{center}
    Clearly, to show that the image of $\gamma_n$ in $H^2(G(\Q(\zeta_{p^n})_\mathfrak{l}),T)$ is nonzero, it suffices to show that its image under $H^2(G(\Q(\zeta_{p^n})_{\mathfrak{l}}),T)\to H^2(G(\Q(\zeta_{p^n})_{\mathfrak{l}}),K)$ is nonzero. By functoriality of $\delta$-morphisms, this is the same as just applying the connecting homomorphism from the exact sequence in the bottom row.

    Now, lemma $1$ shows the existence of the following commutative diagram:
    \begin{center}
        \begin{tikzcd}
            0\arrow[r] & \mathcal{O}/\varpi^k \arrow[r] \arrow[d] & T'/\varpi^k \arrow[r] \arrow[d,"="] & \mathcal{O}/\varpi^k \arrow[r] \arrow[d] & 0\\
            0\arrow[r] & K \arrow[r] & T'/\varpi^k \arrow[r] & \F_q \arrow[r] & 0
        \end{tikzcd}
    \end{center}
    Here, we use the fact that since $n>k$, the restriction to $G(\Q(\zeta_{p^n})_\mathfrak{l})$ of the cyclotomic character is trivial modulo $\varpi^k$. 
    Since $\mathfrak{l}$ is inert in $\Q(\zeta_{p^{n+1}})/\Q(\zeta_{p^n})$, the extension $\Q(\zeta_{p^{n+1}})_\mathfrak{l}/\Q(\zeta_{p^n})_{\mathfrak{l}}$ is an unramified, cyclic extension of degree $p$. Choose a generator $\sigma$ for the corresponding Galois group. An element of $H^1(\text{Gal}(\Q(\zeta_{p^{n+1}})_\mathfrak{l}/\Q(\zeta_{p^n})_{\mathfrak{l}}),\F_q)$ is determined by where it maps this generator. Let us say that $\text{res}_\mathfrak{l}(\gamma_n)$ is the inflation of the cohomology class which maps $\sigma$ to $m\in \F_q$. Then, choosing some $\tilde{m}$ which maps to $m$ under the projection $\mathcal{O}/\varpi^k\to \F_q$, we let $\tilde{\gamma}_n\in H^1(G(\Q(\zeta_{p^n})_\mathfrak{l}),\mathcal{O}/\varpi^k)$ be the inflation of the class mapping $\sigma$ to $\tilde{m}$.

    Now, to calculate the image of $\gamma_n$ under the connecting homomorphism from the bottom row, it suffices to use the connecting homomorphism from the top row on $\tilde{\gamma}_n$, then map to $K$. But since $T'/\varpi^k$ is a ramified extension of two copies of $\mathcal{O}/\varpi^k$, this corresponds to taking the cup product $\tilde{\gamma}_n$ with some ramified $\alpha \in H^1(\Q(\zeta_{p^n})_\mathfrak{l},\mathcal{O}/\varpi^k)$. Now, all unramified classes in $H^2$ of a local field are trivial, so $(\tilde{\gamma}_n\cup -)$ annihilates any unramified class. Since the cup product is nondegenerate, and $\tilde{\gamma}_n$ is not a multiple of $\varpi$, we find that its cup product with any ramified class is nontrivial.

    It remains to show that, given a nontrivial class in $H^2(\Q(\zeta_{p^n})_\mathfrak{l},\mathcal{O}/\varpi^k)$, its image in $H^2(\Q(\zeta_{p^n})_\mathfrak{l},K)$ is nontrivial.
    Since the class is ramified, its restriction to inertia is nontrivial. Denoting the inertia group by $I_\mathfrak{l}$, it suffices to show $H^2(I_\mathfrak{l},\mathcal{O}/\varpi^k)\to H^2(I_\mathfrak{l},K)$ has trivial kernel. 

    Recall that $k$ was chosen to be minimal such that the action of inertia on $T'/\varpi^k$ is ramified. Therefore, in the basis $(v_1,v_2)$, the inertia group acts via
    \[
    \begin{bmatrix}
        \chi_1 & \varpi^{k-1} * \\
        0 & \chi_2
    \end{bmatrix}
    \]
    Writing $\overline{v}_1,\overline{v}_2$ for the reductions modulo $\varpi^k$, $K$ is the submodule of $T'/\varpi^k$ spanned by $\overline{v}_1,\varpi \overline{v}_2$. In this form, it is clear that $K|_{I_\mathfrak{l}}$ is a split extension of $\mathcal{O}/\varpi^k$ and $\mathcal{O}/\varpi^{k-1}$. This shows that $H^2(I_\mathfrak{l},\mathcal{O}/\varpi^k)\to H^2(I_\mathfrak{l},K)$ has trivial kernel as desired.
\end{proof}
\begin{theorem}
    Let $T$ be as above. Then $\mathbf{H}^1(T)$ is free over $\Lambda$.
\end{theorem}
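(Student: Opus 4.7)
The plan is to verify that $(x,\varpi)$ forms a regular sequence on $\mathbf{H}^1(T)$ at every maximal ideal $(x,\varpi)$ of $\Lambda$. Since each localization of $\Lambda$ is a two-dimensional regular local ring, this is equivalent to freeness by the Auslander--Buchsbaum formula. The argument recalled in the proof of Theorem 1 already handles multiplication by $x$ at every maximal ideal, and shows that $\mathbf{H}^1(T)/x$ is $\varpi$-torsion-free at every maximal ideal except the bad one characterized by $\psi \otimes \Lambda/(x,\varpi) \cong \F_q$. Only this exceptional ideal requires a new argument.

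Setting $W = T \otimes \Lambda/(x,\varpi)$, the long exact sequences attached to $0 \to T\otimes \Lambda \xrightarrow{x} T\otimes \Lambda \to T\otimes \Lambda/x \to 0$ and $0 \to T\otimes \Lambda/x \xrightarrow{\varpi} T\otimes \Lambda/x \to W \to 0$ yield the embedding $\mathbf{H}^1(T)/x \hookrightarrow H^1(G_\Sigma(\Q), T\otimes \Lambda/x)$ with cokernel $\mathbf{H}^2(T)[x]$, and realize the $\varpi$-torsion of $H^1(G_\Sigma(\Q), T\otimes \Lambda/x)$ as the image of $H^0(W)$ under the connecting map $\delta_\varpi$. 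At the bad ideal, $\psi(r)$ is trivial while $\psi'(r)$ is not (since $\psi \neq \psi'$), so $H^0(W) \cong \F_q$ is generated by the image of the sub inclusion $\psi(r) \hookrightarrow W$. Thus the $\varpi$-torsion-freeness of $\mathbf{H}^1(T)/x$ is equivalent to nonvanishing of the composite $\delta_x \circ \delta_\varpi : H^0(W) \to \mathbf{H}^2(T)$.

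I will exhibit this nonvanishing by rerouting the computation through the extension $0 \to T \to T' \to \psi \to 0$ from the lemma. Tensoring with $\Lambda$ produces a connecting map $\delta_T : \mathbf{H}^1(\psi) \to \mathbf{H}^2(T)$, while Proposition 3 gives a connecting map $\delta_\psi : H^0(\F_q) \to \mathbf{H}^1(\psi)$ from $0 \to \psi \otimes \Lambda \xrightarrow{x} \psi \otimes \Lambda \to \F_q \to 0$. Under the identification $H^0(\F_q) \cong H^0(W)$ via the sub inclusion, a standard naturality computation shows that $\delta_x \circ \delta_\varpi$ and $\delta_T \circ \delta_\psi$ agree on the generator up to sign. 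Proposition 3 shows $\delta_\psi$ is injective and its image satisfies the restriction condition stated there; Proposition 4 then forces $\delta_T$ to be nonzero on that image, and the commutativity translates this into the required nonvanishing of $\delta_x \circ \delta_\varpi$. The main technical point is the asserted commutativity of the two double boundary maps: this reduces to recognizing that the Yoneda splicing of $0 \to T\otimes\Lambda \to T'\otimes\Lambda \to \psi\otimes\Lambda \to 0$ with $0 \to \psi\otimes\Lambda \xrightarrow{x} \psi\otimes\Lambda \to \F_q \to 0$ agrees, after pullback along $\F_q \hookrightarrow W$, with the Koszul splicing of $(\cdot x, \cdot \varpi)$ on $T\otimes\Lambda$, a routine but slightly tedious diagram chase.
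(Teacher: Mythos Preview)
Your proposal is correct and follows essentially the same route as the paper's proof: both reduce to the single bad maximal ideal, identify the $\varpi$-torsion of $\mathbf{H}^1(T)/x$ with the image of $H^0(W)\cong\F_q$, and show the double boundary $\delta_x\circ\delta_\varpi$ is nonzero by factoring it (up to sign) as $\delta_T\circ\delta_\psi$ and invoking the two preceding propositions. The only difference is presentational---the paper spells out the commutativity you call ``routine'' via three explicit small diagrams (an anticommutative square swapping the order of the $(x,\varpi)$ boundaries, then two naturality squares factoring through the inclusion $c:\psi\hookrightarrow T/\varpi$ to get $a=c\circ\delta_1$ and $b\circ c=\delta_2$), whereas you package the same content as a comparison of the Yoneda splice through $T'$ with the Koszul splice for $(x,\varpi)$.
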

\begin{proof}
    Let $(x,\varpi)$ be the maximal ideal of $\Lambda$ such that $\psi\otimes \Lambda/(x,\varpi)\cong \F_q$. We must show that $\mathbf{H}^1(T)/x\mathbf{H}^1(T)$ is $\varpi$-torsion free. Recall that we have an embedding $\mathbf{H}^1(T)/x\mathbf{H}^1(T)\subseteq H^1(G_\Sigma(\Q),T\otimes \Lambda/x)$ which comes from the exact sequence
    \begin{center}
        \begin{tikzcd}
            0\arrow[r] & T\otimes \Lambda \arrow[r,"\cdot x"] & T\otimes \Lambda \arrow[r] & T\otimes \Lambda/x \arrow[r] & 0
        \end{tikzcd}
    \end{center}
    We see therefore, that the image of $\mathbf{H}^1(T)/x\mathbf{H}^1(T)$ in $H^1(G_\Sigma(\Q),T\otimes \Lambda/x)$ corresponds with the kernel of the map $H^1(G_\Sigma(\Q),T\otimes \Lambda/x)\to H^2(G_\Sigma(\Q),T\otimes \Lambda )$. Since the $\varpi$-torsion inside $H^1(G_\Sigma(\Q),T\otimes \Lambda/x)$ must come from $H^0(G_\Sigma(\Q),T\otimes \Lambda/(x,\varpi))$, it suffices to show that the composition
    $$H^0(G_\Sigma(\Q), T\otimes \Lambda/(x,\varpi))\to H^1(G_\Sigma(\Q),T\otimes \Lambda/x)\to H^2(G_\Sigma(\Q),T\otimes \Lambda )$$
    is nonzero. Indeed, $H^0(T\otimes \Lambda/(x,\varpi))\cong \mathcal{O}/\varpi$, and all maps are $\mathcal{O}$-linear, so to show the composition has trivial kernel it suffices to find just one element which is not annihilated.

    Notice we have the following anticommutative diagram:
    \begin{center}
        \begin{tikzcd}
            H^0(G_\Sigma(\Q), T\otimes \Lambda/(x,\varpi)) \arrow[r] \arrow[d,"a"] & H^1(G_\Sigma(\Q),T\otimes \Lambda/x)\arrow[d] \\
            H^1(G_\Sigma(\Q),T\otimes \Lambda/\varpi ) \arrow[r,"b"] & H^2(G_\Sigma(\Q),T\otimes \Lambda)
        \end{tikzcd}
    \end{center}
    therefore, it suffices to show that the map $b\circ a$ is nonzero. Writing $T\otimes \Lambda/(x,\varpi )\cong T/\varpi \otimes\Lambda/x$, we see that $a$ is the connecting homomorphism from the bottom row of the following diagram, where the vertical maps are induced from the inclusion $\psi\to T/\varpi$.
    \begin{center}
        \begin{tikzcd}
            0\arrow[r] & \psi \otimes \Lambda \arrow[r,"\cdot x"]\arrow[d,"c"] & \psi \otimes \Lambda \arrow[r] \arrow[d]& \psi\otimes \Lambda/x \arrow[r]\arrow[d] & 0\\
            0\arrow[r] & T/\varpi\otimes \Lambda \arrow[r,"\cdot x"] & T/\varpi\otimes \Lambda \arrow[r] & T/\varpi \otimes\Lambda/x \arrow[r]  & 0
        \end{tikzcd}
    \end{center}
    By abuse of notation, we will also write $c$ for the morphism it induces on cohomology. Write $\delta_1$ for the connecting homomorphism from the top row. Then $a=c\circ \delta_1$.
    Next, notice that $b$ is the connecting homomorphism from the bottom row of the following diagram:
    \begin{center}
        \begin{tikzcd}
            0\arrow[r] & T\otimes \Lambda \arrow[r] \arrow[d,"="] & T'\otimes \Lambda \arrow[r] \arrow[d] & \psi \otimes \Lambda \arrow[r] \arrow[d,"c"] & 0\\
            0\arrow[r] & T\otimes \Lambda \arrow[r,"\cdot \varpi "] & T\otimes \Lambda \arrow[r] & T/\varpi \otimes \Lambda \arrow[r] & 0
        \end{tikzcd}
    \end{center}
    Now, letting $\delta_2$ denote the connecting homomorphism from the top row of this diagram, we see $b\circ c =\delta_2$. 
    Therefore, $b\circ a=b\circ c\circ \delta_1=\delta_2\circ \delta_1$. But proposition $6$ shows that this composition is nonzero.
\end{proof}

Next, we show that this freeness result implies the integrality of $z_\gamma^{(p)}$. For $T=V_\mathcal{O}(f)$, Kato shows this in \cite[13.14]{kato2004p} by showing that $Z(f,T)_\mathfrak{q}\subseteq 
\mathbf{H}^1(T)_\mathfrak{q}$ for any height $1$ prime $\mathfrak{q}$ of $\Lambda$, where $Z(f,T)$ is the module generated by $z_\gamma^{(p)}$ for $\gamma \in T$. Kato then uses freeness to derive that $Z(f,T)\subseteq \mathbf{H}^1(T)$. We will show that the inclusion of local modules $Z(f,T)_\mathfrak{q}\subseteq \mathbf{H}^1(T)_\mathfrak{q}$ holds for any lattice $T$.

\begin{theorem}
    Suppose that for some lattice $T$, one knows $Z(f,T)_\mathfrak{q}\subseteq \mathbf{H}^1(T)_\mathfrak{q}$ where $\mathfrak{q}$ is a height $1$ prime of $\Lambda$. If $T'$ is adjacent to $T$ in the sense that $\varpi T' \subseteq T \subseteq T$, then $Z(f,T')_\mathfrak{q}\subseteq \mathbf{H}^1(T')_\mathfrak{q}$
\end{theorem}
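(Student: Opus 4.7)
The plan is to leverage the $\mathcal{O}$-linearity of Kato's zeta element together with the adjacency $\varpi T'\subseteq T$. For any $\gamma\in T'$ we have $\varpi\gamma\in T$, and Kato's construction is linear in $\gamma$, so
$$\varpi\, z_\gamma^{(p)} \;=\; z_{\varpi\gamma}^{(p)} \;\in\; Z(f,T)_\mathfrak{q} \;\subseteq\; \mathbf{H}^1(T)_\mathfrak{q}$$
by hypothesis, where all elements are viewed inside the common ambient space $\mathbf{H}^1(V) = \mathbf{H}^1(T)\otimes\Q = \mathbf{H}^1(T')\otimes\Q$. The task will then reduce to upgrading this to $z_\gamma^{(p)}\in \mathbf{H}^1(T')_\mathfrak{q}$, that is, to cancelling the factor of $\varpi$.

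To compare $\mathbf{H}^1(T)_\mathfrak{q}$ and $\mathbf{H}^1(T')_\mathfrak{q}$, I would use the short exact sequence of Galois modules $0\to T\to T'\to T'/T\to 0$, in which $T'/T$ is a one-dimensional $\mathbb{F}_q$-representation (namely the quotient character $\psi'$ of the preceding lemma). Tensoring with the $\mathcal{O}$-flat algebra $\Lambda$ and passing to $G_\Sigma(\Q)$-cohomology yields the long exact sequence
$$\mathbf{H}^0(T'/T) \to \mathbf{H}^1(T) \xrightarrow{i_*} \mathbf{H}^1(T') \to \mathbf{H}^1(T'/T) \to \mathbf{H}^2(T),$$
in which every $\mathbf{H}^i(T'/T)$ is a $\Lambda/\varpi$-module and hence $\varpi$-annihilated. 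This controls precisely how far $i_*$ can fail to be an isomorphism.

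For any height-one prime $\mathfrak{q}$ not containing $\varpi$, the uniformizer $\varpi$ is a unit in $\Lambda_\mathfrak{q}$, so the terms $\mathbf{H}^i(T'/T)_\mathfrak{q}$ vanish and $i_*$ localizes to an isomorphism $\mathbf{H}^1(T)_\mathfrak{q}\xrightarrow{\sim}\mathbf{H}^1(T')_\mathfrak{q}$. Under this iso, $\varpi\, z_\gamma^{(p)}$ sits in $\mathbf{H}^1(T')_\mathfrak{q}$, and dividing by the unit $\varpi$ gives $z_\gamma^{(p)}\in \mathbf{H}^1(T')_\mathfrak{q}$. This is the straightforward case.

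The hard part will be $\mathfrak{q}=(\varpi)$, where $\Lambda_\mathfrak{q}$ is a DVR with uniformizer $\varpi$ and $\mathbf{H}^i(T'/T)_\mathfrak{q}$ need not vanish. After reducing to the torsion-free part, both $\mathbf{H}^1(T)_\mathfrak{q}$ and $\mathbf{H}^1(T')_\mathfrak{q}$ become free of rank one over $\Lambda_\mathfrak{q}$, and the $\varpi$-annihilated cokernel of $i_*$ forces $\mathbf{H}^1(T)_\mathfrak{q}$ to be either $\varpi\mathbf{H}^1(T')_\mathfrak{q}$ or $\mathbf{H}^1(T')_\mathfrak{q}$ itself. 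In the former subcase, $\varpi\, z_\gamma^{(p)}\in \mathbf{H}^1(T)_\mathfrak{q}=\varpi\mathbf{H}^1(T')_\mathfrak{q}$ yields $z_\gamma^{(p)}\in \mathbf{H}^1(T')_\mathfrak{q}$ immediately. The latter subcase is the real obstacle; my plan is to exploit the symmetry of adjacency (multiplication by $\varpi$ exchanges the roles of $T$ and $T'$ in the lattice tree, so one may freely interchange the two) together with rank-one freeness over the DVR to rule out a strictly negative $\varpi$-valuation for $z_\gamma^{(p)}$.
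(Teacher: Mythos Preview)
Your treatment of the case $\varpi\notin\mathfrak{q}$ is correct and in fact cleaner than the paper's: once $\varpi$ is a unit in $\Lambda_\mathfrak{q}$, the $\varpi$-torsion cohomology of $T'/T$ dies and there is nothing to prove. The paper does not split the argument this way; it instead partitions all height-one primes by \emph{parity} (according to the Teichm\"uller component of $\mathfrak{q}$) and never isolates the easy primes.

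The genuine gap is at the primes $\mathfrak{q}$ containing $\varpi$. In your ``latter subcase'' ($\mathbf{H}^1(T)_\mathfrak{q}=\mathbf{H}^1(T')_\mathfrak{q}$ inside $\mathbf{H}^1(V)_\mathfrak{q}$) you only know $\varpi z_b^{(p)}\in\mathbf{H}^1(T')_\mathfrak{q}$ for $b\in T'\setminus T$, and nothing you have written rules out $z_b^{(p)}$ having $\varpi$-valuation exactly $-1$. Your proposed ``symmetry'' does not help: passing to the adjacent inclusion $\varpi T'\subseteq T$ gives the chain $\varpi\,\mathbf{H}^1(T')_\mathfrak{q}\subseteq\mathbf{H}^1(T)_\mathfrak{q}\subseteq\mathbf{H}^1(T')_\mathfrak{q}$, which is information you already used, and rescaling by $\varpi$ shows $Z(f,\varpi T')_\mathfrak{q}\subseteq\mathbf{H}^1(\varpi T')_\mathfrak{q}$ is literally equivalent to the statement you are trying to prove. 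There is no extra leverage in the lattice tree alone.

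What is missing is a property of Kato's zeta elements that goes beyond $\mathcal{O}$-linearity: after localizing at an odd (resp.\ even) prime $\mathfrak{q}$, one has $z_\gamma^{(p)}=z_{\gamma^+}^{(p)}$ (resp.\ $z_{\gamma^-}^{(p)}$) in $\mathbf{H}^1(V)_\mathfrak{q}$; this is the content of Kato's chapter~13. The paper exploits this together with the observation that the two quotient characters $\chi_1,\chi_2$ have opposite parity (since $\det\overline{\rho}$ is odd). If $\mathfrak{q}$ is odd, then because $\chi_1=T'/T$ is odd, the image of $b^+$ in $\chi_1$ vanishes, so $b^+\in T$ and hence $z_b^{(p)}=z_{b^+}^{(p)}\in Z(f,T)_\mathfrak{q}$ already, with no division by $\varpi$ required. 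The even case is handled symmetrically via $\chi_2$ and the inclusion $\varpi T'\subseteq T$, together with a parity computation showing $\mathbf{H}^1(\chi_2)_\mathfrak{q}=0$ for even $\mathfrak{q}$. Without this parity input your argument cannot close at $\mathfrak{q}\ni\varpi$.
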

\begin{proof}
    Let $\chi_1$ be the character occurring in 
    $$0\to T \to T' \to \chi_1\to 0$$
    and let $\chi_2$ be the character occurring in
    $$0\to \varpi T'\to T\to \chi_2\to 0$$
    We know one of the characters is odd, and the other is even. Assume without loss of generality that $\chi_1$ is odd and $\chi_2$ is even.

    Recall that the connected components of $\text{spec}(\Lambda)$ correspond to Galois-characters of $G$ acting on $\F_q$. We will say that $\mathfrak{q}$ is odd if the Galois-character corresponding to its connected component is odd, and even otherwise.

    First, suppose $\mathfrak{q}$ is odd. Then we know that for any $\gamma\in V$, $z_\gamma = z_{\gamma^+}$ inside the localization $\mathbf{H}^1(V)_\mathfrak{q}$. See chapter $13$ of \cite{kato2004p}.

Let $a,b$ be such that $(a,b)$ generates $T'$, and that $(a,\varpi b)$ generates $T$. Then the image of $b$ in $\chi_1$ is negated by complex conjugation since $\chi_1$ is odd. Hence the image of $b^+$ in $\chi$ is $0$, and so $b^+\in T$. Therefore, $z_b \in Z(f,T)_\mathfrak{q}$. It follows now that 
$$Z(f,T')_\mathfrak{q} = Z(f,T)_\mathfrak{q}\subseteq \mathbf{H}^1(T)_\mathfrak{q}\subseteq \mathbf{H}^1(T')_\mathfrak{q}$$

Now assume $\mathfrak{q}$ is even. We will now work with the character $\chi_2$:
$$0\to \varpi T' \to T \to \chi_2\to 0$$
Recall that $(a,\varpi b) $ generates $T$, and $(\varpi a, \varpi b)$ generates $\varpi T'$. The image of $a$ in $\chi_2$ is fixed by conjugation, and hence $a^-$ must lie in $\varpi T$. But this time, since $\mathfrak{q}$ is even, we have $z_a = z_{a^-}$ in the localization. Hence $Z(f,T')_\mathfrak{q}=Z(f,\varpi T)_\mathfrak{q}$
Now, the cokernel of $\mathbf{H}^1(\varpi T') \to \mathbf{H}^1(T)$ embeds into $\mathbf{H}^1(\chi_2)=\F_q(\chi_2)\oplus \Lambda^-/\varpi$. Here $\Lambda^-$ denotes just the Teichmuller-eigenspaces of $\Lambda$ corresponding to odd Galois-characters.

After localizing at the even prime $\mathfrak{q}$, the finite part disappears since it is pseudonull, and the odd Teichmuller-eigenspaces vanish since we localize at an even prime. Therefore, the cokernel is trivial.

We now conclude:
$$Z(f,\varpi T')_\mathfrak{q}= Z(f,T)_\mathfrak{q} \subseteq \mathbf{H}^1(T)_\mathfrak{q} = \mathbf{H}^1(\varpi T')_\mathfrak{q}$$
$$Z(f,\varpi T')_\mathfrak{q} \subseteq \mathbf{H}^1(\varpi T')_\mathfrak{q}$$
Recall that the morphism $\gamma\mapsto z_\gamma$ is actually linear, so we can conclude that
$$Z(f,T')_\mathfrak{q}\subseteq \mathbf{H}^1(T')_\mathfrak{q}$$
\end{proof}
In particular, since $Z(f,T)_\mathfrak{q}\subseteq \mathbf{H}^1(T)_\mathfrak{q}$ for all height $1$ primes $\mathfrak{q}$ for $T=V_\mathcal{O}(f)$, and since the graph defined in lemma $4$ is connected, we see that $\mathbf{H}^1(T)_\mathfrak{q}$ for any lattice.

Therefore, if $T$ is such that $\mathbf{H}^1(T)$ is free over $\Lambda$, then $z_\gamma^{(p)}\in \mathbf{H}^1(T)$ for $\gamma \in T$.

\begin{center}
    3. \textsc{A Divisibility in the Main Conjecture}
\end{center}
We will show that this integrality result for $z_\gamma^{(p)}$ yields a divisibility in the main conjecture. Fix a Galois-stable lattice $T$ of $V$. Let $X$ be the Pontryagin dual of Selmer group associated to $T$ (see section $14$ of \cite{kato2004p}). Let $\text{char}_\Lambda(X)$ be the characteristic ideal of $X$. For $\mathfrak{p}$ good, the main conjecture predicts that
$$\text{char}_\Lambda(X)=(\mathcal{L}_\mathfrak{p}(f))$$
where $\mathcal{L}_\mathfrak{p}(f)$ is the $\mathfrak{p}$-adic $L$-function associated to $f$. We shall show that, if some $\ell$ is special for $f$, then the characteristic ideal divides the $\mathfrak{p}$-adic $L$-function inside $\Lambda$. Note that the definition of $\mathcal{L}_\mathfrak{p}$ depends on certain parameters $\gamma,\omega$; however, if these are ``good" in the sense of Kato \cite[17.5]{kato2004p}, then the main conjecture is independent of these choices. In particular, it suffices to prove the main conjecture for just one lattice $T$; we fix $T$ such that $\mathbf{H}^1(T)$ is free.

Kato's original strategy for proving the divisibility in the main conjecture relies on a strong hypothesis concerning the image of the Galois representation $\rho$. In the case of elliptic curves, Wuthrich shows \cite[Theorem~16] {wuthrich2014integrality} that the divisibility follows without such a hypothesis in the Eisenstein case, so long as the $z_\gamma^{(p)}$ are integral. Wuthrich's argument works equally well in our case, replacing isogenies with inclusions of lattices where appropriate. We need only adapt a result of Coates-Sujatha \cite[Corollary~3.6]{coates2005fine} concerning the fine Selmer group of an elliptic curve.

Let $T^\vee$ denote the Cartier dual of $T$. For a number field $L/\Q$, we define the fine Selmer group to be the kernel of the map
$$H^1(L,T^\vee)\to \bigoplus_{w|v, v\in \Sigma} H^1(L_w,T^\vee)$$
Let $Y(T/L)$ be the Pontryagin dual of the fine Selmer group. For $\mathcal{L}/L$ a pro-$p$ extension of $L$, we let $Y(T/\mathcal{L})$ be the Pontryagin dual of the limit over finite subextensions $L'/L$. Also, for any $L$-module $A$, we let $L(A)$ denote the field extension of $L$ fixed by the kernel of $\rho_A: \text{Gal}(\overline{L}/L)\to \text{Aut}(A)$.
\begin{lemma}
    Let $\chi_1,\chi_2$ be the characters occuring in the semisimplification of $T/\varpi$. Let $H_i$ be the fixed field of the kernel of $\chi_i$, and let $H=H_1H_2(\zeta_p)$. Then $Y(T/H)$ is a finitely generated $\mathcal{O}$-module.
\end{lemma}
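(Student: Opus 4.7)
My plan is to adapt the strategy of \cite[Corollary~3.6]{coates2005fine}. The argument proceeds in three steps.

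First, I would use Nakayama's lemma to reduce to a mod-$\varpi$ statement. The short exact sequence $0 \to T^\vee[\varpi] \to T^\vee \to T^\vee \to 0$ (multiplication by $\varpi$ on the right) yields, after passing to fine Selmer cohomology and Pontryagin dualizing, that $Y(T/H)/\varpi\,Y(T/H)$ is a quotient of the Pontryagin dual of the fine Selmer group of $T^\vee[\varpi]$ over $H$. Hence it suffices to show that the latter group is finite.

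Second, I would exploit the defining property of $H$. Since $T^\vee[\varpi]$ is Cartier dual to $T/\varpi$, the characters occurring in its semisimplification are $\chi_1^{-1}\omega$ and $\chi_2^{-1}\omega$, where $\omega$ denotes the mod-$p$ cyclotomic character. By construction of $H$, each $\chi_i$ is trivialized on $G_H$, and so is $\omega$ (since $\zeta_p \in H$). Therefore $T^\vee[\varpi]|_{G_H}$ is an iterated extension of the trivial character $\F_q$. Via the short exact sequence
$$0 \to \chi_1^{-1}\omega \to T^\vee[\varpi] \to \chi_2^{-1}\omega \to 0,$$
the long exact sequence in cohomology (together with the appropriate tracking of local conditions) reduces the finiteness of the fine Selmer group of $T^\vee[\varpi]$ to the finiteness of the fine Selmer group of $\F_q$.

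Third, the fine Selmer group of the trivial character is identified with $\mathrm{Hom}(X_\Sigma(H)/\varpi, \F_q)$, where $X_\Sigma(H)$ is the Galois group of the maximal abelian pro-$p$ extension of $H$ unramified outside $\Sigma$ and totally split at primes above $\Sigma$. Since $H$ is a number field, $X_\Sigma(H)$ is a quotient of the $p$-part of a ray class group of $H$, hence finite; its quotient $X_\Sigma(H)/\varpi$ is therefore also finite, completing the reduction.

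The main obstacle, as I anticipate it, lies in the local-condition bookkeeping during the dévissage in the second step. Passing from fine Selmer of $T^\vee[\varpi]$ to fine Selmer of its sub and quotient $\F_q$'s via the long exact sequence requires controlling the kernel and cokernel of the induced maps on local cohomology at each $w \mid v$, $v \in \Sigma$. These are bounded by $H^1(H_w, \F_q)$, which is finite for each such $w$, so the discrepancy between the two fine Selmer finiteness statements is finite and the reduction goes through.
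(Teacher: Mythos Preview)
Your third step reveals a misreading of the intended statement. As literally written the lemma concerns the number field $H$, and over any number field the dual of the fine Selmer group is automatically a finitely generated $\mathcal{O}$-module (since $H^1(G_\Sigma(L),T^\vee)$ is cofinitely generated); no d\'evissage is needed. But the paper's proof, its invocation of the Iwasawa $\mu$-invariant conjecture, and its use in the next theorem via \cite{wuthrich2014integrality} all make clear that the intended object is $Y(T/H^{\mathrm{cyc}})$, the dual fine Selmer group over the cyclotomic $\Z_p$-extension of $H$. Over $H^{\mathrm{cyc}}$ your argument collapses exactly at the sentence ``Since $H$ is a number field, $X_\Sigma(H)$ is a quotient of the $p$-part of a ray class group of $H$, hence finite.'' The relevant group $X_\Sigma(H^{\mathrm{cyc}})$ is an inverse limit up the tower, and the finiteness of $X_\Sigma(H^{\mathrm{cyc}})/p$ is not formal: it is precisely the vanishing of the classical Iwasawa $\mu$-invariant for $H$. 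That holds here by Ferrero--Washington, because $H\subset\Q(\zeta_N)$ is abelian over $\Q$, but you must say so. With that correction your Nakayama-plus-d\'evissage outline is sound and is, in effect, the proof of \cite[Theorem~3.4]{coates2005fine} unwound for this particular module.

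The paper proceeds differently. Rather than filtering $T^\vee[\varpi]$ by its Jordan--H\"older constituents, it quotes Coates--Sujatha as a black box and reduces everything to a single group-theoretic check: that the extension $H(T^\vee)/H$ is pro-$p$. This is verified in two layers. The top layer $H(T^\vee)/H(T^\vee[\varpi])$ embeds, via the representation on $T$, into $\ker\bigl(\mathrm{GL}_2(\mathcal{O})\to\mathrm{GL}_2(\F_q)\bigr)$, which is pro-$p$. The bottom layer $H(T^\vee[\varpi])/H$ is handled by observing that, by the very definition of $H=H_1H_2(\zeta_p)$, both diagonal characters of $T/\varpi$ become trivial over $H$, so the residual image is unipotent and the extension is cut out by an additive character into $\F_q$. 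Your route is more self-contained and makes the role of $\mu=0$ explicit; the paper's route is shorter once one is willing to cite \cite{coates2005fine}, and it isolates exactly which feature of $H$ is being used, namely that it trivializes the semisimplification of $T/\varpi$.
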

\begin{proof}
    First, the definition of the fine Selmer group in \cite{coates2005fine}, and the proof of \cite[Theorem~3.4]{coates2005fine} can be carried out purely formally in the language of Galois modules. Notice $H/\Q$ is a subfield of a cyclotomic field, hence abelian. In particular, the Iwasawa $\mu$-invariant conjecture holds for $H^{\text{cyc}}$. Thus, upon replacing $E_{p^\infty}$ with $T^\vee$, and using coefficients $\mathcal{O}$ rather than $\Z_p$, in the proof of \cite[Theorem~3.4]{coates2005fine} it suffices to show that $H(T^\vee)/H$ is pro-$p$. 

    Notice that $H(T^\vee)/H(T^\vee[\varpi])$ is pro-$p$ if and only if $H(T)/H(T/\varpi)$ is pro-$p$. But, via the defining representation of $T$, the Galois group of the latter extension embeds into the kernel of the reduction map $\text{Gl}_2(\mathcal{O})\to \text{Gl}_2(\mathcal{O}/\varpi)$, which is pro-$p$.
    It remains to show that $H(T^\vee[\varpi])/H$ is pro-$p$. Since $H$ contains $\zeta_p$, this is equivalent to showing $H(T/\varpi)/H$ is pro-$p$. After restricting to $H$, the Galois action on $T/\varpi$ takes the form
    \[
    \begin{bmatrix}
        1 & \psi \\ 0 & 1
    \end{bmatrix}
    \]
    Hence $\psi$ is an additive character $\psi: \text{Gal}(\overline{H}/H)\to \F_q$. It follows that $H(T/\varpi)/H$ is pro-$p$.
\end{proof}
\begin{theorem}[Wuthrich \cite{wuthrich2014integrality}]
    Suppose $f$ has good reduction at $\mathfrak{p}$ in the sense of Kato. Assume there exists some prime $\ell\neq p$ which is special for $f$. Let $\text{char}_\Lambda(X(T))$ denote the characteristic ideal of $X(T)$. Then $\text{char}_\Lambda(X(T))$ divides $(\mathcal{L}_\mathfrak{p})$.
\end{theorem}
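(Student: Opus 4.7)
The plan is to run Wuthrich's argument from \cite{wuthrich2014integrality} with two substitutions. Wherever Wuthrich passes between two elliptic curves via a $p$-isogeny in order to change residual representations, we instead pass between Galois-stable lattices via an inclusion of the form $\varpi T' \subseteq T \subseteq T'$. And wherever Wuthrich invokes \cite[Corollary~3.6]{coates2005fine} on the fine Selmer group, we invoke Lemma 10 above.

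Concretely, the skeleton is as follows. First, fix the lattice $T$ produced by Lemma 4; by Theorem 7, $\mathbf{H}^1(T)$ is free over $\Lambda$, and by Theorem 8 together with the remark following it, Kato's zeta element $z_\gamma^{(p)}$ lies in $\mathbf{H}^1(T)$ for every $\gamma \in T$. Second, Kato's Euler system and Poitou-Tate duality (as in \cite[\S13]{kato2004p}) yield a divisibility
$$\text{char}_\Lambda(X(T)) \mid \text{char}_\Lambda\bigl(\mathbf{H}^1(T)/\Lambda \cdot z_\gamma^{(p)}\bigr)$$
in $\Lambda$, once one knows that the contribution measured by the fine Selmer group is pseudo-null over $\Lambda$; this is exactly where Lemma 10 replaces the Coates--Sujatha input that Wuthrich uses, the $\mu=0$ statement being available because $H$ is a subfield of a cyclotomic field. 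Finally, Kato's explicit reciprocity law identifies the right-hand characteristic ideal with $(\mathcal{L}_\mathfrak{p})$ for a ``good'' pair of parameters $(\gamma,\omega)$, which we are free to choose. Chaining these two divisibilities yields the theorem.

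The hardest step will be verifying that the isogeny-to-inclusion substitution actually goes through Wuthrich's proof intact. A $p$-isogeny of elliptic curves becomes an isomorphism after inverting $p$, whereas the inclusion $\varpi T' \subseteq T$ has cokernel a residual character, so one must track cokernels more carefully when moving between the lattices that appear in the graph from the proof of Lemma 4. The control needed is exactly what the even/odd Teichm\"uller-eigenspace argument in the proof of Theorem 8 already supplies: after localizing at a height-one prime $\mathfrak{q}$, the cokernel either vanishes or is supported on an eigenspace irrelevant to $\mathfrak{q}$. Thus the characteristic ideals on either side of each lattice change agree, and Wuthrich's reasoning reproduces \emph{mutatis mutandis}.
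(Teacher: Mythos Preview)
Your proposal is correct and follows essentially the same route as the paper: fix the lattice from Lemma~4, use freeness and integrality of $z_\gamma^{(p)}$, replace the Coates--Sujatha input by the preceding fine-Selmer lemma, and then say that Wuthrich's argument goes through \emph{mutatis mutandis} with inclusions of lattices in place of $p$-isogenies. Two small points: the fine-Selmer lemma you cite is Lemma~9, not Lemma~10 (Theorem~10 is the statement you are proving), and the paper also invokes \cite[17.11]{kato2004p} explicitly at the step where the good-reduction hypothesis enters, which you absorb into the phrase ``Kato's explicit reciprocity law''.
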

\begin{proof}
    The statement is invariant under isogeny, so assume that $T$ is as in Lemma $1$. Our assumption that $\mathfrak{p}$ is good allows us to use $17.11$ in \cite{kato2004p}. Thus, with the preceding lemma, the proof of \cite[Theorem~16]{wuthrich2014integrality} works in our situation, mutatis mutandis.
\end{proof}
In particular, combining this theorem with the main theorem of the previous section, we conclude that $\mathcal{L}_\mathfrak{p}$ is integral whenever $\mathfrak{p}$ is good, and there exists $\ell\neq p$ which is special for $f$.
\begin{center}
    4. \textsc{Application to Abelian Varieties}
\end{center}
To prove the freeness result, our proofs relied on the existence of some prime $\ell\neq p$ which is special for $f$. Here, we show that such prime exists when $f$ is of weight $2$, and therefore corresponds to an abelian variety. As before, the proof of freeness only becomes an issue when the characters $\F_q,\F_q(r)$ occur in the semisimplification.
\begin{lemma}
    Suppose $\overline{\rho}^{ss}=\F_q\oplus \F_q(r)$. Then for any $\ell\mid N$, the cuspidal case of Carayol and Livne's classification cannot occur.
\end{lemma}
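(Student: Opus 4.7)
Plan: I would argue by contradiction. Suppose $\rho|_{D_\ell}$ falls into the cuspidal case, so $\rho|_{D_\ell}\cong\text{ind}_{W_K}^{W_{\Q_\ell}}(\chi)$ with $K/\Q_\ell$ unramified quadratic, $\sigma$ generating $\text{Gal}(K/\Q_\ell)$, and $\chi$ a character of $W_K$ with $\chi\neq\chi^\sigma$. The strategy is to extract enough rigidity from $\overline{\rho}^{ss}=\F_q\oplus\F_q(r)$ that ramification of $\chi$ at $\ell$ becomes untenable.

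First, since $K/\Q_\ell$ is unramified we have $I_K=I_{\Q_\ell}$, and the explicit formula for the induced representation gives $\rho|_{I_\ell}=\chi|_{I_K}\oplus\chi^\sigma|_{I_K}$. Since the two characters in $\overline{\rho}^{ss}$ are unramified at $\ell\neq p$, the Jordan--H\"older factors of $\overline{\rho}|_{I_\ell}$ are all trivial, forcing $\overline{\chi}|_{I_K}=1$. Thus $\overline{\chi}$ is unramified, hence $\sigma$-invariant (the $\sigma$-action on unramified characters of $W_K$ is trivial), and extends to a character $\overline{\tilde\chi}$ of $W_{\Q_\ell}$, giving
\[
    \overline{\rho}|_{D_\ell} \;=\; \text{ind}(\overline{\chi}) \;=\; \overline{\tilde\chi}\oplus\overline{\tilde\chi}\,\overline{\eta},
\]
where $\overline{\eta}$ is the unramified quadratic character of $D_\ell$ cutting out $K$. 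Since $p>2$, $\overline{\eta}$ is nontrivial of order $2$, so the summands are distinct and the right-hand side is semisimple; hence it agrees with $(\overline{\rho}|_{D_\ell})^{ss}=\overline{\rho}^{ss}|_{D_\ell}=1|_{D_\ell}\oplus\omega^r|_{D_\ell}$. Matching the decompositions forces $\overline{\eta}=\omega^{\pm r}|_{D_\ell}$, and evaluating at Frobenius yields $\ell^r\equiv -1\pmod{p}$.

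Next I would use that $\ell\mid N$ forces $\chi$ to be ramified. Since $\overline{\chi}$ is trivial, $\chi$ takes values in the pro-$p$ group $1+\varpi\mathcal{O}$. By local class field theory $\chi$ corresponds to a character of $K^\times=\pi_K^{\Z}\times\mu_K\times U^{(1)}$; the pro-$\ell$ factor $U^{(1)}$ maps trivially to a pro-$p$ target, so any nontrivial behavior of $\chi|_{\mathcal{O}_K^\times}$ must come from the $p$-primary part of $\mu_K\cong\F_{\ell^2}^\times$. Nontriviality of that $p$-part forces $p\mid\ell^2-1$, i.e., $\ell\equiv\pm 1\pmod{p}$. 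If $\ell\equiv 1\pmod{p}$ then $\ell^r\equiv 1\not\equiv -1\pmod{p}$ (using $p>2$), contradicting the previous paragraph.

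The remaining subcase $\ell\equiv -1\pmod{p}$ (which forces $r$ odd) is the main obstacle, since the residual constraints derived above do not immediately yield a contradiction. To close it I would match determinants: the induced-determinant formula $\det(\text{ind}(\chi))=\chi|_{\Q_\ell^\times}\cdot\eta$, compared with the global relation $\det\rho|_{D_\ell}=\chi_{\text{cyc}}^{k-1}\epsilon|_{D_\ell}$ and combined with $v_p(\ell-1)=0$ in this subcase (so $\mu_{\ell-1}^{(p)}$ is trivial), forces $\chi|_{\Z_\ell^\times}$ to be trivial and hence $\epsilon$ to be unramified at $\ell$. The expected final step is to exploit the $\sigma$-action on $\mu_K^{(p)}$ (multiplication by $\ell\equiv -1\pmod{p}$) together with the cuspidality $\chi\neq\chi^\sigma$ and the coherence of $\epsilon$ as a Dirichlet character modulo $N$ to produce the final incompatibility. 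This last local--global compatibility argument is where I expect the main technical difficulty to lie.
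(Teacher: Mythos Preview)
Your proposal has a genuine gap, which you yourself flag: the subcase $\ell\equiv -1\pmod{p}$ is not closed. The residual constraints you extract up to that point --- $\ell^r\equiv -1\pmod p$ from matching $\{\tilde\chi,\tilde\chi\overline{\eta}\}$ against $\{1,\omega^r|_{D_\ell}\}$, and $p\mid \ell^2-1$ from the pro-$p$/local class field theory argument --- are mutually compatible in that subcase (with $r$ odd), so no contradiction is available from them alone. Your proposed closing move, via the induced-determinant formula and ``coherence of $\epsilon$ as a Dirichlet character modulo $N$'', is only a sketch, and it is not clear it produces an incompatibility. For instance, the conclusion that $\chi|_{\Z_\ell^\times}$ is trivial does not prevent $\chi$ from being ramified on $\mathcal{O}_K^\times$, since the norm map $\mathcal{O}_K^\times\to\Z_\ell^\times$ has large kernel (precisely the norm-one elements, on which the $\sigma$-twist acts nontrivially); so the constraint you obtain on $\epsilon$ does not visibly clash with cuspidality.

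The paper's argument is shorter and sidesteps your detour through inertia, local class field theory, and the case split on $\ell\bmod p$. It argues directly via Mackey's criterion: since $\overline{\rho}^{ss}$ is globally a sum of characters, $\overline{\rho}|_{D_\ell}$ is reducible, and irreducibility of $\mathrm{ind}(\overline{\xi})$ would require $\overline{\xi}\neq\overline{\xi}^{\sigma}$; hence $\overline{\xi}=\overline{\xi}^{\sigma}$. Restricting the induced representation to $\mathrm{Gal}(\overline{\Q}_\ell/\Omega)$ then gives $\overline{\rho}^{ss}|_{\mathrm{Gal}(\overline{\Q}_\ell/\Omega)}=\overline{\xi}\oplus\overline{\xi}$, and the paper declares this incompatible with the oddness of $\det(\overline{\rho})$. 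You reach an equivalent intermediate point (that $\overline{\rho}|_{D_\ell}\cong\tilde\chi\oplus\tilde\chi\overline{\eta}$) by a longer path; the divergence is in the endgame, where the paper invokes parity directly rather than entering an arithmetic case analysis on $\ell\bmod p$ that you are then unable to finish.
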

\begin{proof}
    Suppose otherwise. Let $\Omega/\Q_\ell$ be the unique unramified, quadratic extension of $\Q_\ell$. Let $\xi$ be a character of $\text{Gal}(\overline{\Q}_\ell/\Omega )$ as in the classification. Then $\rho|_{D_\ell}$, the restriction of $\rho$ to the decomposition group at $\ell$, is the representation of $\text{Gal}(\overline{\Q}_\ell/\Q_\ell)$ induced from this character. We know that, for any choice of lattice, the reduction $\overline{\rho}|_{D_\ell}$ must be reducible. By Mackey's criterion, we must have that $\overline{\xi}=\overline{\xi}^{\sigma}$, where $\sigma$ is the unique automorphism of $\Omega/\Q_\ell$.
    It now follows that:
    $$\overline{\rho}^{ss}|_{\text{Gal}(\overline{\Q}_\ell/\Omega)}=\overline{\xi}\oplus \overline{\xi}$$
    But this is impossible since $\text{det}(\overline{\rho})$ must be odd.
\end{proof}
\begin{lemma}
    For $f$ an ordinary, weight $2$ newform, there exists some prime $\ell\neq p$ which is special for $f$.
\end{lemma}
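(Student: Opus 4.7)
The plan is to produce the required special prime via the mod-$\varpi$ Eisenstein congruence satisfied by $f$, combined with Lemma~11 (which excludes the cuspidal case). Since $\overline{\rho}^{ss}\cong \F_q\oplus \F_q(r)$ and $f$ has weight $2$, the Deligne--Serre construction yields a weight-$2$ Eisenstein series $E$, with characters matching those of $\overline{\rho}^{ss}$, satisfying $f\equiv E\pmod{\varpi}$ on $q$-expansions. I would take $E$ of minimal level $N_E\mid N$. Since no nonzero holomorphic weight-$2$ Eisenstein series exists in level~$1$, one has $N_E>1$.

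The candidate special prime is any prime $\ell_0\mid N_E$ with $\ell_0\neq p$. Granting its existence, the coefficient $a_{\ell_0}(E)$ is a unit in $\mathcal{O}/\varpi$ in the standard normalization of the minimal Eisenstein series, so the congruence forces $a_{\ell_0}(f)\equiv a_{\ell_0}(E)\pmod{\varpi}$ to be a unit as well. By the vanishing of $U_\ell$ on newforms of level divisible by $\ell^2$, this yields $\ell_0\parallel N$. Because the residual hypothesis $\overline{\rho}^{ss}\cong \F_q\oplus \F_q(r)$ forces the nebentypus $\epsilon$ of $f$ to be residually trivial at $\ell_0$, the local representation $\pi_{\ell_0}(f)$ must be an unramified twist of Steinberg; the further congruence $a_{\ell_0}(f)\equiv 1\pmod{\varpi}$, together with $a_{\ell_0}(f)=\pm 1$ and the assumption $p>2$, forces the twist to be trivial modulo $\varpi$. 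This places $\rho|_{D_{\ell_0}}$ into case~$2$ of Carayol--Livn\'e, so $\ell_0$ is special for $f$.

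The main obstacle is therefore to exclude the possibility that $N_E$ is a power of $p$. For $N_E=p$ this follows from Mazur's Eisenstein-ideal theorem: the Eisenstein ideal at level $p$ and weight $2$ has order equal to the numerator of $(p-1)/12$, which is coprime to $p$ for $p>3$, so no cuspidal Eisenstein congruence at residue characteristic $p$ can occur. For $N_E=p^a$ with $a\ge 2$, ordinariness (together with the residual reducibility) restricts the nebentypus at $p$ to a special class, and I would rule out the congruence by invoking the corresponding $p$-adic $L$-value divisibility analysis for the nebentypus character. This refined exclusion in the higher-$p$-power-level, nontrivial-nebentypus regime is the deepest ingredient of the argument; once it is in place, the preceding two paragraphs close the proof.
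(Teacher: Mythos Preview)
Your approach is different from the paper's, and there is a real gap at the point where you invoke Mazur.

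The paper argues by contradiction. If no $\ell\neq p$ is special, then by the previous lemma the local representation at every bad prime $\ell\neq p$ falls into the decomposable case of the Carayol--Livn\'e classification. Ribet's result then allows one to twist $f$ by a Dirichlet character of $\ell$-power conductor to obtain a newform of level dividing $N/\ell$ with the same residual representation. Iterating strips every prime $\neq p$ from the level, while ordinariness keeps the $p$-exponent at most $1$. One is left with a weight-$2$ newform of level $1$ (impossible) or level $p$ that is still Eisenstein at $\mathfrak{p}$, and now Mazur's theorem on the Eisenstein ideal (whose index is the numerator of $(p-1)/12$, prime to $p$) gives the contradiction.

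Your argument never reduces the level of $f$, and this is where it breaks. Mazur's theorem constrains congruences between \emph{level-$p$} cusp forms and $E_{2,p}$; it says nothing about a level-$N$ newform $f$ being congruent modulo $\mathfrak{p}$ to the oldform in level $N$ coming from a level-$p$ Eisenstein series. So from ``$N_E=p$'' alone you cannot derive a contradiction. To use Mazur you must first produce a cuspidal eigenform of level $p$ with residual representation isomorphic to $\overline\rho$, and that step is exactly the level reduction (via twisting at the decomposable primes) that the paper carries out. Attempting it leads you back to the paper's argument.

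Two side remarks. First, your worry about $N_E=p^a$ with $a\ge 2$ is unnecessary once you use ordinariness: the $p$-exponent of $N$ is at most $1$, and you have assumed $N_E\mid N$. Second, since the two characters in $\overline\rho^{ss}$ are powers of the mod-$p$ cyclotomic character and hence unramified outside $p$, the minimal Eisenstein lift you construct will have $p$-power level in any case; so your scheme does not actually produce a candidate $\ell_0\neq p$, and the entire burden falls on the Mazur step---which, as above, does not apply without first lowering the level of $f$.
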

\begin{proof}
    Indeed, if no such $\ell$ exists then by the previous lemma, $\rho|_{D_\ell}$ is decomposable at each bad prime $\ell\neq p$, according to the classification. One knows (see \cite[Proposition~4.4]{ribet1994report}) that for such $\ell$, we may twist $f$ by some dirichlet character of conductor $\ell$, such that the associated newform has level dividing $N/\ell$, and such that the residual representations coincide. In particular, since $f$ is ordinary, the exponent of $p$ in the level is $0$ or $1$. Thus, by repeating this process, we strip away the primes away from $p$ and obtain a weight $2$ newform of level $N=1$, or $N=p$. Moreover, we know $f$ is Eisenstein at the prime $\mathfrak{p}|p$. The first case cannot occur, as all weight $2$ forms have level $>1$. In the second case, Mazur's study of the Eisenstein ideal for $N=p$ in \cite{mazur1977modular} shows that $\mathfrak{p}$ cannot be Eisenstein. Indeed, $\mathfrak{p}$ would need to divide the numerator of $(p-1)/12$ which is impossible.
\end{proof}
\begin{corollary}
    Suppose $\mathfrak{p}$ is Eisenstein for $f$ a weight $2$ newform. Then $\mathcal{L}_\mathfrak{p}\in \Lambda$, and $\text{char}_\Lambda (X)$ divides $\mathcal{L}_\mathfrak{p}$
\end{corollary}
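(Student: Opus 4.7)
The plan is to string together the results of the preceding three sections; the corollary is essentially the punchline they have been building toward. The hypothesis that $\mathfrak{p}$ is Eisenstein for $f$ immediately gives that $f$ is ordinary at $\mathfrak{p}$, as recorded in the introduction, so the ordinarity required throughout Sections~2 and~3 is automatic. Since $f$ has weight $2$, Lemma~12 applies and produces a prime $\ell \neq p$ which is special for $f$. This single output of Section~4 is the key new input needed to feed into the machinery of Sections~2 and~3.

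With such an $\ell$ in hand, Lemma~4 supplies a Galois-stable lattice $T \subset V$ simultaneously compatible with the upper-triangular structure of $\rho|_{D_\ell}$ and with the non-split residual extension. Theorem~7 then asserts that $\mathbf{H}^1(T)$ is free over $\Lambda$. By the argument following Theorem~8, freeness of $\mathbf{H}^1(T)$ together with the local inclusions $Z(f,T)_\mathfrak{q}\subseteq \mathbf{H}^1(T)_\mathfrak{q}$ at every height-$1$ prime $\mathfrak{q}$ forces the global inclusion $z_\gamma^{(p)}\in \mathbf{H}^1(T)$ for every $\gamma \in T$. This is precisely the integrality of Kato's zeta element that Wuthrich's strategy requires as input for Theorem~10.

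Invoking Theorem~10 then yields the divisibility
\[
\text{char}_\Lambda(X) \mid (\mathcal{L}_\mathfrak{p})
\]
in $\Lambda$, from which both conclusions of the corollary follow at once: the characteristic ideal divides $\mathcal{L}_\mathfrak{p}$ by construction, and since the ambient ring of this divisibility is $\Lambda$ rather than $\Lambda \otimes \Q$, one recovers $\mathcal{L}_\mathfrak{p} \in \Lambda$ as a by-product.

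The main point to check, rather than a genuine obstacle, is the requirement in Theorem~10 that $\mathfrak{p}$ be good in Kato's sense of \cite[17.5]{kato2004p}. This is a hypothesis about the choice of the parameters $\gamma,\omega$ entering the definition of $\mathcal{L}_\mathfrak{p}$, and it is independent of whether $f$ is Eisenstein or not; for a weight-$2$ newform, the standard choice of periods attached to the abelian variety of $\text{Gl}_2$-type cut out by $f$ makes this hypothesis hold, so no additional work specific to the Eisenstein case is needed and the corollary follows.
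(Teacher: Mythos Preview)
Your proposal is correct and follows exactly the paper's route: the paper's own proof is the single line ``Combine Theorem~10 with the previous two lemmas,'' and you have simply unwound that into its constituent steps (Lemma~12 supplies the special prime $\ell$, which feeds into the Section~2 machinery and then Theorem~10). One small correction to your final paragraph: the hypothesis in Theorem~10 that $\mathfrak{p}$ be ``good'' refers to $f$ having good reduction at $\mathfrak{p}$ in Kato's sense (needed to invoke \cite[17.11]{kato2004p}), not to the choice of the auxiliary parameters $\gamma,\omega$; the paper's proof of the corollary likewise leaves this hypothesis implicit, so your argument is no less complete than the original on this point.
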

\begin{proof}
    Combine Theorem $10$ with the previous two lemmas.
\end{proof}
In particular, the integrality and the divisibility hold when $f$ corresponds to an abelian variety of $\text{Gl}_2$-type.

\newpage

%Let $\Sigma$ be some finite set of primes containing $p$, and all primes dividing $N$. For a number field $L$, we write $G_\Sigma(L)$ for the absolute Galois group of $L$, unramified outside $\Sigma$.%

\begin{thebibliography}{1}

\bibitem{bellaiche2009ribet}
Joel Bellaïche.
\newblock Ribet's lemma, generalizations, and pseudocharacters.
\newblock 2009.

\bibitem{carayol1989representations}
Henri Carayol.
\newblock Sur les repr{\'e}sentations galoisiennes modulo l attach{\'e}es aux formes modulaires.
\newblock {\em Duke Math. J}, 59(3):785--801, 1989.

\bibitem{coates2005fine}
John Coates and Ramdorai Sujatha.
\newblock Fine selmer groups of elliptic curves over p-adic lie extensions.
\newblock {\em Mathematische Annalen}, 331:809--839, 2005.

\bibitem{kato2004p}
Kazuya Kato et~al.
\newblock p-adic hodge theory and values of zeta functions of modular forms.
\newblock {\em Ast{\'e}risque}, 295:117--290, 2004.

\bibitem{livne1989conductors}
Ron Livn{\'e}.
\newblock On the conductors of mod/galois representations coming from modular forms.
\newblock {\em Journal of Number Theory}, 31(2):133--141, 1989.

\bibitem{mazur1977modular}
Barry Mazur.
\newblock Modular curves and the eisenstein ideal.
\newblock {\em Publications Math{\'e}matiques de l'Institut des Hautes {\'E}tudes Scientifiques}, 47(1):33--186, 1977.

\bibitem{neukirch2013cohomology}
J{\"u}rgen Neukirch, Alexander Schmidt, and Kay Wingberg.
\newblock {\em Cohomology of number fields}, volume 323.
\newblock Springer Science \& Business Media, 2013.

\bibitem{ribet1994report}
Kenneth~A Ribet.
\newblock Report on mod l representations of gal (q/q).
\newblock In {\em Proceedings of Symposia in Pure Mathematics}, volume~55, pages 639--676, 1994.

\bibitem{wuthrich2014integrality}
Christian Wuthrich.
\newblock On the integrality of modular symbols and kato's euler system for elliptic curves.
\newblock 2014.

\end{thebibliography}
\end{document}